\newtheorem{theorem}{Theorem}[section]
\newtheorem{lemma}[theorem]{Lemma}
\newtheorem{corollary}[theorem]{Corollary}
\newtheorem{proposition}[theorem]{Proposition}
\renewcommand{\leq}{\leqslant}
\renewcommand{\geq}{\geqslant}
\theoremstyle{definition}
\newtheorem{definition}[theorem]{Definition}
\newtheorem{example}[theorem]{Example}
\theoremstyle{definition}
\numberwithin{equation}{section}
\numberwithin{equation}{section} \numberwithin{figure}{section}
\author{}
\address{}
\address{}
\email{}
\title{Wiener distribution  on holonomy groups}
\author{Yuguang Zhang}
\address{Xi'an Jiaotong-Liverpool University, Suzhou, China}
\email{Yuguang.Zhang@xjtlu.edu.cn}
\begin{document}
\begin{abstract}
  This paper proves a convergence theorem for the push-forward Wiener measures on holonomy groups  via stochastic parallel transports along convergent metric connections.
\end{abstract}

\maketitle
\footnotetext{Supported in part by grant NSFC-12531001.}

 \section{Introduction}
Connections are more fundamental   than curvatures. In  electromagnetism, the field strengths  are   interpreted as the  curvatures  of  $U(1)$-connections,  referred to as    potentials, and  charged particles interact with  the background    electromagnetic field through the    potential rather than  the  strength itself. The time-independent
Schrödinger equation has the form  $$\sum_i (\partial_i+\Gamma_i)^2\Psi + {\rm lower \ order \ terms} = 0,$$ where $\Psi$ is the  wave function,  and   $-\sqrt{-1}\Gamma$ is the  potential of  the  background    electromagnetic field.  The interaction terms are  $\Gamma_i\Psi$, which  involve only $\Gamma$ instead of the field  strength, i.e.,   essentially  the curvature $F=d\Gamma$.  The Aharonov-Bohm effect   illustrates  that
 when an  electron moves   along non-contractible loops $\gamma$  in a region with a magnetic field that has a  non-trivial potential $\Gamma\neq 0$ but a  vanishing strength  $d\Gamma=0$, i.e.,  a flat connection,     the wave function $\Psi$  acquires  an additional gauge invariant   phase factor $$\exp (- \int_\gamma \Gamma) \in U(1)$$ (See Section 10.5.3 of \cite{Na} for the details)  which is an element of the holonomy group of $\Gamma$.

Holonomy groups are invariants of connections  defined through  parallel transports.
  Let $M$ be a  connected  compact   manifold of dimension $n$,  and $g$ be a Riemannian metric on $M$. We consider
  a real  vector bundle $\pi: V \rightarrow M$  of rank $r$ on $M$,  and  a metric connection $\nabla$ on $V$ compatible with   a bundle  metric $h$,  i.e.,   $d h(\xi,\zeta)=h(\nabla\xi,\zeta)+h(\xi,\nabla\zeta)$ for any two sections $\xi$ and $\zeta$ of $V$.
  If $\gamma:[0,1]\rightarrow M$ is a  smooth curve, then the parallel transport $//_s(\gamma): V_{\gamma (0)}\rightarrow V_{\gamma (s)}$ along $\gamma$ is an isomorphism  given by $A_0 \mapsto A_s=//_s(\gamma)A_0$ where  $V_x=\pi^{-1}(x)$ is the fibre of $V$ over a point  $x\in M$, and
 $A_s $ is a  parallel section along $\gamma$, i.e.,  $\nabla_{\dot{\gamma}} A_s \equiv 0$ and $A_s\in V_{\gamma (s)}$.   If  $r=2$, and $V$ admits a complex structure  compatible  with the metric, then $V$ can be given  a complex Hermitian  line bundle structure, and  a metric connection $\nabla$ is a $U(1)$-connection.  Note that $U(1)=SO(2)$   and   $O(2)/SO(2)=\mathbb{Z}_2$.   For  complex line bundles,  if $\gamma$ and $\gamma'$ are curves homotopic to each other with $\gamma(0)=\gamma'(0)$ and $\gamma(1)=\gamma'(1)$, and  $\nabla$ is a $U(1)$-connection,  then  the parallel transports satisfy  $//_1(\gamma)=//_1(\gamma')\exp(-\int_{\iota(D)}F_\nabla) ,$  where $\iota: D\rightarrow M$ is a smooth  map from the unit  disc in $ \mathbb{C}$ to $M$ such that $\iota|_{\partial D}= \gamma \cdot \gamma'^{-1}$, and $F_\nabla$ is the curvature of $\nabla$ which is a pure imaginary 2-form.

 The holonomy group of $\nabla$ is defined as    $${\rm Hol}_x(\nabla)=\{//_1(\gamma)\in O(r)| \gamma\in \Omega_x^{\infty}\}, $$ where    $\Omega_x^{\infty}$ is  the set of   smooth loops based at $x\in M$, i.e.  curves $\gamma: [0,1]\rightarrow M$  with $\gamma(0)=\gamma(1)=x$.
 In the case of  the tangent bundle, i.e.,  $V=TM$,
  equipped with  the  Levi-Civita connection $\nabla_{h}$ of a Riemannian metric $h$,    $ {\rm Hol}_x(\nabla_h)$ is called the Riemannian holonomy group of $(M, h)$.   If $\nabla$ is a $U(1)$-connection on a complex line bundle $V$, then ${\rm Hol}_x(\nabla) \subseteq U(1)\subset O(2)$.

Elie Cartan introduced the notion of holonomy group in the 1920's, and it   has become an important research topic in differential geometry (cf. \cite{J}).  See \cite{BMKNZ} for applications of holonomies to  geometric phases   in physics.
Berger's famous  classification theorem (cf. \cite{Be, J}) asserts that if $M$ is simply connected, and $h$ is irreducible and non-symmetric, then the Riemannian holonomy group ${\rm Hol}_x(\nabla_h)$ can  only be equal to one of the following groups: $$SO(n), \  \  U(n/2), \ \   SU(n/2), \  \  Sp(n/4),  \  \  Sp(n/4)Sp(1),   \  \   G_2, \  \ Spin(7).$$   $(M,h)$ is  a K\"ahler (respectively, Calabi-Yau or HyperK\"ahler etc.) manifold  if ${\rm Hol}_x(\nabla_h)=U(n/2)$  (respectively,  $SU(n/2)$ or  $Sp(n/4)$ etc.).

One unsatisfactory   aspect of holonomy groups is their  behaviour  during the convergence  of connections.  To illustrate this, we consider a specific   example:   the trivial complex line bundle  $V\cong S^1\times\mathbb{C}$ on the circle $S^1$.  A $U(1)$-connection $\nabla=d+\Gamma$ is given by a pure imaginary 1-form $\Gamma$ on $S^1$, and is flat, i.e.,  $F_\nabla=d\Gamma=0$.  If $\nabla^t=d+t\Gamma$, $t\in [0,1]$,  then $\nabla^t$ converges to the trivial connection $\nabla^0=d$ when $t\rightarrow 0$.  The holonomy groups are ${\rm Hol}_x(\nabla^t)=\{e^{2\pi \nu t \theta \sqrt{-1}}\in U(1)| \nu\in \mathbb{Z}\}$ where  $2\pi  \theta \sqrt{-1}=-\int_{S^1}\Gamma$, and  ${\rm Hol}_x(\nabla^0)=\{1\in U(1)\}$.   If $t\theta$ is a rational number, then  ${\rm Hol}_x(\nabla^t)$ is finite, and if  otherwise,  ${\rm Hol}_x(\nabla^t)$ is dense in $U(1)$.  As $t\rightarrow 0$, ${\rm Hol}_x(\nabla^t)$ are  either finite groups or dense subgroups,  keep jumping,  and become denser and denser. Ultimately, we find that $${\rm Hol}_x(\nabla^t) \rightarrow U(1)$$ in the Hausdorff sense, and the limit is not  ${\rm Hol}_x(\nabla^0)$, even though   $\nabla^t \rightarrow\nabla^0$.

  This paper aims to associate the holonomy group with an additional  measure that  behaves  well during  the convergence of connections. Another motivation is to find  certain  measure theoretic   objects on  holonomy groups, which enable us to identify the limit holonomy along convergent sequences of   connections.

Heuristically, we  define a probability  measure $ \mu_x(\nabla)$ on $O(r)$ by
$$\int_K\mu_x(\nabla)=\mathcal{Z}^{-1}\int_{\{\gamma \in \Omega_x^{\infty}| //_1(\gamma) \in K \}}\exp ({-\frac{1}{4}\int_0^1| \dot{\gamma}(s)|^2_gds})\mathcal{D}(\gamma),$$  for any measurable    set $K\subseteq O(r)$, where $\mathcal{D}(\gamma)$ is the path integral volume form, which  unfortunately  does not exist, and $\mathcal{Z}$ is the normalisation    constant.  In  physics literature,   path integrals are usually  approximated  by finite dimensional  integrals.  If we consider a partition  $0=s_0< s_1<\cdots < s_m=1$ with $s_i=\frac{i}{m}$  and $m=2^\nu$, $\nu \in \mathbb{N}$,  then for any $(x_1, \cdots, x_{m-1})\in M^{m-1}$, we choose a piecewise curve $\gamma$ such that $ \gamma(s_i)=x_i$, and  $\gamma(s)$ is a minimal  geodesic for $s\in [s_i,s_{i+1}]$ where $x=  \gamma(s_0)=\gamma(s_m)$. This  defines a map $\tilde{\Phi}^m: M^{m-1} \rightarrow \Omega_x^\infty$. Heuristically, we define the approximation measures    $\mu^m_x(\nabla)$     by
$$\int_K\mu^m_x(\nabla) \sim\frac{1}{(4\pi (1/m))^{n(m-1)/2}} \int_{(//_1\circ\tilde{ \Phi}^m)^{-1}(K) } e^{-\sum\limits_{i=0}^{m-1}\frac{d_g^2(x_{i+1}, x_i)}{4(1/m)}}\prod_{i=1}^{m-1} dv_g(x_i) , $$ and we  expect  that $\mu^m_x(\nabla)$ converges to $\mu_x(\nabla)$ in the distribution sense when $m\rightarrow\infty$, where $d_g$ denotes the distance function of $g$.
Here  $a\sim b$ means that $|a-b|\rightarrow 0$ when $m\rightarrow\infty$.

 In
   Section 2, we define      $\mu_x(\nabla)$ rigorously  as the push-forward   Wiener measure on the space of loops via  the stochastic  holonomy map  (Definition \ref{def}). This measure  can be  viewed as the `shadow' of the   Wiener measure on the holonomy group.  Finite dimensional  approximations of the  Wiener measure have been studied from various  perspectives, for example  \cite{AD,DWM} etc..
 Section 3  defines    $\mu^m_x(\nabla)$ rigorously  (Definition \ref{def+}), and proves  an  approximation theorem (Theorem \ref{thm+2}).  Furthermore,  we conjecture\footnote{It might be a direct consequence of Strock-Varadhan's support theorem (See \cite{IN,MSS}). We leave it as an exercise  to interested readers.} that the support of $ \mu_x(\nabla)$ is the closure of the holonomy group, i.e.,    $${\rm supp} (\mu_x(\nabla))= \overline{ {\rm Hol}_x(\nabla)} $$ in $O(r)$, and prove a weaker result  in Section 3, which is   $$\overline{ \bigcup_{ m\in \mathbb{N}}{\rm supp} (\mu_x^m(\nabla))} =\overline{ {\rm Hol}_x(\nabla)}.$$

  Our  main result is the following theorem.

  \begin{theorem}\label{main}   Let $(M,g)$ be a smooth connected  compact Riemannian  $n$-dimensional  manifold, and   $x\in M$. Consider    a real  vector bundle  $\pi: V \rightarrow M$ of rank $r$ on $M$, which is equipped with a metric connection $\nabla$ on $V$ that is   compatible with a bundle metric $h$. Suppose that    $\nabla^k$ is a family of connections compatible with  metrics  $h^k$   on $V$  such that $$ \|\nabla^k-\nabla\|_{C^1(g\otimes h)}\rightarrow 0,$$   and $h^k|_x \rightarrow  h|_x$  in $V_x^* \otimes V_x^*$,
  when $k\rightarrow \infty$.
Then  $ \mu_x(\nabla^k)$ converges to $ \mu_x(\nabla)$ in the distribution sense   when $k\rightarrow \infty$, i.e.,  for any continuous function $\psi$ on $O(r)$, $$ \lim_{k\rightarrow \infty}\int_{O(r)}\psi \mu_x(\nabla^k)= \int_{O(r)}\psi \mu_x(\nabla).$$
\end{theorem}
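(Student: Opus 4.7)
The plan is to reduce the weak convergence of the push-forward measures to convergence in probability of the stochastic holonomies under the Wiener measure $W_x$ (which depends only on the fixed metric $g$), and then to access this through the finite-dimensional approximations $\mu^m_x$ constructed in Section~3. Write $I^m(\nabla) = \int_{O(r)} \psi\, d\mu^m_x(\nabla)$ and $I(\nabla) = \int_{O(r)} \psi\, d\mu_x(\nabla)$ for a fixed continuous test function $\psi$ on $O(r)$; by Theorem~\ref{thm+2} we have $I^m(\nabla) \to I(\nabla)$ as $m \to \infty$ for each single connection, and the goal is $I(\nabla^k) \to I(\nabla)$. One mild preliminary: the holonomy of $\nabla^k$ a priori lies in $O(V_x, h^k|_x)$, but the hypothesis $h^k|_x \to h|_x$ furnishes isomorphisms $T^k: (V_x, h^k|_x) \to (V_x, h|_x)$ with $T^k \to \mathrm{Id}$, so everything can be transported to the single group $O(r)$ without affecting limits.

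The first step is to fix $m$ and show $I^m(\nabla^k) \to I^m(\nabla)$ as $k \to \infty$. By Definition~\ref{def+}, $I^m(\nabla)$ is a Gaussian-weighted integral over $M^{m-1}$ of $\psi\circ //_1^{\nabla}\circ\tilde\Phi^m$, and the weight is independent of $\nabla$. For each fixed $(x_1,\ldots,x_{m-1})$ the curve $\tilde\Phi^m(x_1,\ldots,x_{m-1})$ is piecewise geodesic and hence smooth on each segment, so parallel transport along it solves a linear ODE whose coefficients are the connection $1$-form evaluated along the curve. The hypothesis $\|\nabla^k-\nabla\|_{C^0(g\otimes h)} \to 0$ gives uniform convergence of these coefficients, and standard ODE stability yields pointwise convergence $//_1^{\nabla^k}\circ \tilde\Phi^m \to //_1^{\nabla}\circ\tilde\Phi^m$. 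Boundedness of $\psi$ on the compact group $O(r)$ combined with integrability of the Gaussian weight then lets dominated convergence close this step.

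The main obstacle is interchanging the limits $m\to\infty$ and $k\to\infty$, which requires upgrading the pointwise approximation $I^m(\nabla^k) \to I(\nabla^k)$ to convergence uniform in $k$. I would revisit the proof of Theorem~\ref{thm+2} and verify that its quantitative estimates on $|I^m(\nabla)-I(\nabla)|$ depend on $\nabla$ only through $\|\nabla\|_{C^0(g\otimes h)}$, together with fixed data (the geometry of $(M,g)$, the rank $r$, and the modulus of continuity of $\psi$). Since $\nabla^k \to \nabla$ in $C^0$ forces $\sup_k \|\nabla^k\|_{C^0(g\otimes h)} < \infty$, this would produce an $m$-dependent error bound that is uniform in $k$ and thereby the needed uniformity. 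The delicacy here is ensuring that no higher regularity of the connection is silently used in the stochastic estimates; this is the central technical content of the whole argument.

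Granting the uniformity, the conclusion is a standard three-$\varepsilon$ argument: given $\varepsilon > 0$ choose $m$ so that $|I^m(\nabla)-I(\nabla)| < \varepsilon/3$ and $|I^m(\nabla^k)-I(\nabla^k)| < \varepsilon/3$ for all large $k$; then choose $k_0$ so that $|I^m(\nabla^k)-I^m(\nabla)| < \varepsilon/3$ for $k \geq k_0$; the triangle inequality yields $|I(\nabla^k)-I(\nabla)|<\varepsilon$ for $k \geq k_0$. Since $\psi$ was an arbitrary continuous test function on $O(r)$, this is exactly the weak convergence $\mu_x(\nabla^k) \to \mu_x(\nabla)$ claimed.
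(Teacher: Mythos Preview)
Your strategy diverges from the paper's, and the point at which it diverges is precisely where it has a gap. The paper does \emph{not} pass through the finite-dimensional approximations $\mu_x^m$ at all. Instead it argues directly on the loop space: for each initial vector $A_0^k \to A_0$ in $V_x$, the stochastic parallel sections $A_s^k$ and $A_s$ solve the local It\^o SDE~(\ref{eq0+}) with coefficients $\Gamma^k$ and $\Gamma$ respectively; one localizes using a coordinate cover $\mathcal{U}$ and a sequence of stopping times $0=S_0\le S_1\le\cdots$ (as in Emery's Lemma~3.5), and on each stochastic interval $\mathcal{I}_j$ applies the stability theorem for SDEs (Theorem~15, Chapter~V of \cite{P}) to conclude $A^{k|S_{j+1}} \to A^{|S_{j+1}}$ in probability. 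An induction on $j$ and patching via (8.10) of \cite{Em} gives $\mathbb{E}\bigl(\sup_{s}|A_s^k-A_s|\bigr)\to 0$. Running this over an $h^k$-orthonormal frame yields $\tilde{//}_1^k \to \tilde{//}_1$ in probability, hence in distribution, hence the push-forwards converge.

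Your route instead requires that the approximation $I^m(\nabla^k)\to I(\nabla^k)$ of Theorem~\ref{thm+2} hold \emph{uniformly in $k$}. You acknowledge this is ``the central technical content'' but do not carry it out, and the proof of Theorem~\ref{thm+2} in the paper gives you no purchase: it is purely qualitative, invoking Lemma~\ref{le+}(ii) (i.e.\ Emery's Proposition~8.15) to get convergence in probability of $//_1(X^m)$ to $\tilde{//}_1$ with no rate. Extracting a rate uniform in $\nabla$ from that argument would amount to reproving the SDE approximation theorem with explicit constants depending only on $\sup_k\|\Gamma^k\|_{C^0}$---which is at least as much work as, and arguably more than, the paper's direct stability argument. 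In short, the three-$\varepsilon$ scaffolding is fine, but the load-bearing middle step is missing, and the paper's proof shows that the natural way to supply it is to abandon the finite-dimensional detour and invoke SDE stability on $\tilde{//}_1$ directly.
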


Here the $C^1$-norm $  \|\cdot\|_{C^1(g\otimes h)}$ is defined as $\|\zeta\|_{C^1(g\otimes h)}=\sup\limits_{y\in M}( |\zeta|_{g\otimes h}+ |\nabla' \zeta|_{g\otimes h})(y)$ for any differentiable   section $\zeta$ of $T^*M\otimes {\rm Hom}(V,V)$, where the metric $|\cdot|_{g\otimes h}$ (resp. the connection $ \nabla'$) is induced by the Riemannian metric $g$ and the bundle metric $h$ (resp. $\nabla$ and the Levi-Civita connection of $g$).  

 We can  apply this  theorem to
 many natural scenarios where    a sequence of Riemannian metrics $h^k$ converges  smoothly  to a Riemannian metric $h$  such that the holonomy group  of $h$ is a proper subgroup of the holonomy groups   of $h^k$.
In \cite{Yau},  Yau   uses the continuity method to prove the existence of Ricci-flat Kähler-Einstein metrics on compact Kähler manifolds with vanishing first Chern class, i.e.,  $c_1=0$. The proof involves a family of Kähler metrics $h^t$, $t\in [0,1]$,  converging  smoothly  to a Ricci-flat Kähler-Einstein metric $h$ when $t\rightarrow 1$. When $t<1$, $h^t$ is not  Ricci-flat.  The holonomy groups are ${\rm Hol}_x(\nabla_{h^t})=U(n/2)$  for $ t<1$,  and  $ {\rm Hol}_x(\nabla_{h^1})=SU(n/2)$.  Before reaching  the limit, all the holonomy groups are the same $U(n/2) $; however, at  the limit, the holonomy collapses suddenly  to a  subgroup $H$ that is  isomorphic to  $SU(n/2)$. However,   if we consider the measures  $ \mu_x(\nabla_{h^t})$, then Theorem \ref{main} asserts that  $ \mu_x(\nabla_{h^{t_k}})\rightharpoonup \mu_x(\nabla_{h^1})$ for  sequences $t_k \rightarrow 1$, which indicates that  $ \mu_x(\nabla_{h^t})$ concentrates near $H \cong SU(n/2)$. We may regard $h^t$ as `almost Calabi-Yau' metrics if $t$ is sufficiently close to 1.

The same phenomenon also occurs in  Cao's Ricci-flow proof of the Calabi-Yau theorem (cf. \cite{Cao}), as well as in  Joyce's   constructions of compact $G_2$ and $Spin(7)$ manifolds (cf. \cite{J}) etc..    In the work \cite{DWW} of Dai-Wang-Wei,  Riemannian metrics with special holonomy groups $SU(n/2)$, $G_2$, and $Spin(7)$ are local maxima of  the Yamabe functional. Some min-max sequences  of metrics exhibit  the holonomy groups  $SO(n)$, and the limit Riemannian  metric has the smaller  holonomies, e.g. $ SU(n/2)$.
 \cite{FZ,FZZ1,FZZ2} provide   examples where the holonomy group $SO(4)$ collapses  to $U(2)$ at the infinite limit of long time  solutions of Ricci-flow.  We can  apply  Theorem \ref{main}   to all the above cases.

   The following  result shows that the limit  of  approximation measures  indicates   the limit holonomy.

  \begin{theorem}\label{thm3+}
       Let $M$, $V$, $g$, $h^k$, $h$,  $\nabla^k$, and $\nabla$ be the same as in Theorem \ref{main}.
          \begin{itemize}
\item[i)] For any $m$,  $ \mu_x^m(\nabla^k)$  converges to $ \mu_x^m(\nabla)$ in the distribution sense when $k \rightarrow\infty$.
  \item[ii)]If $H\subseteq O(r)$ is a closed subgroup, and for any open subset $K \subset O(r)\backslash H$ with the closure  $\overline{K} \subset O(r)\backslash H$,   $$\lim_{k\rightarrow\infty}\int_{ K} \mu_x^m(\nabla^k) =0,   $$ for all $m>0$,  then   ${\rm Hol}_x(\nabla)\subseteq H$.     \end{itemize}
    \end{theorem}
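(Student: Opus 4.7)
The plan is to prove part (i) as a direct consequence of the continuous dependence of parallel transport on the connection, and then deduce part (ii) from (i) combined with the support identity $\overline{\bigcup_m \mathrm{supp}(\mu_x^m(\nabla))} = \overline{\mathrm{Hol}_x(\nabla)}$ established in Section 3, via the Portmanteau theorem.

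For (i), the first observation is that $\mu_x^m(\nabla) = (//_1 \circ \tilde\Phi^m)_*\nu_m$, where the Gaussian-type measure $d\nu_m = (4\pi/m)^{-n(m-1)/2}\exp(-\sum_i d_g^2(x_{i+1},x_i)\,m/4)\prod_i dv_g(x_i)$ on $M^{m-1}$ and the broken-geodesic map $\tilde\Phi^m$ depend only on $g$, not on $\nabla$. I would then fix orthonormal frames $e^k$ of $(V_x, h^k|_x)$ converging to an orthonormal frame $e$ of $(V_x, h|_x)$, which is possible since $h^k|_x \to h|_x$. In these frames, the pushforward-producing maps $F_k, F : M^{m-1} \to O(r)$ obtained from the parallel transports of $\nabla^k$ and $\nabla$ along the broken geodesics are well-defined off the $\nu_m$-null set where some consecutive pair $(x_i, x_{i+1})$ lies in each other's cut locus. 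Off that set, the broken geodesic varies continuously in its endpoints, and parallel transport along it solves a linear ODE whose coefficients differ by $O(\|\nabla^k-\nabla\|_{C^0})$ over a total curve length bounded by $m\cdot\mathrm{diam}(M,g)$; a Gr\"onwall estimate therefore yields $F_k \to F$ pointwise $\nu_m$-a.e. Since any continuous $\psi$ on the compact group $O(r)$ is bounded, dominated convergence gives
\[
\int_{O(r)} \psi \, d\mu_x^m(\nabla^k) = \int_{M^{m-1}} \psi\circ F_k \, d\nu_m \longrightarrow \int_{M^{m-1}} \psi\circ F \, d\nu_m = \int_{O(r)} \psi\, d\mu_x^m(\nabla).
\]

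For (ii), I would argue by contradiction: assume $\mathrm{Hol}_x(\nabla) \not\subseteq H$. By the support identity recalled above, $\overline{\bigcup_m \mathrm{supp}(\mu_x^m(\nabla))}$ is not contained in the closed set $H$, so there exist some $m$ and a point $g' \in \mathrm{supp}(\mu_x^m(\nabla))$ with $g'\notin H$. Since $H$ is closed, choose an open neighborhood $K$ of $g'$ in $O(r)$ with $\overline{K}\subset O(r)\setminus H$; by the definition of support, $\mu_x^m(\nabla)(K)>0$. Combining (i) with the Portmanteau theorem applied to the open set $K$ yields
\[
\liminf_{k\to\infty}\int_K \mu_x^m(\nabla^k) \;\geq\; \int_K \mu_x^m(\nabla) \;>\; 0,
\]
contradicting the standing hypothesis that this limit exists and equals zero.

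The main technical obstacle is verifying the pointwise convergence $F_k\to F$ rigorously in the presence of the cut locus, since $\tilde\Phi^m$ is only measurable across it; one has to ensure that the Gr\"onwall-type comparison is uniform on compact subsets of the smooth locus and that the exceptional set is genuinely $\nu_m$-null. Once this is handled carefully, the rest is a soft application of dominated convergence together with Portmanteau; crucially, no new estimates on the continuum Wiener measure $\mu_x(\nabla)$ are required, since both statements live entirely at the level of the finite-dimensional approximations $\mu_x^m$.
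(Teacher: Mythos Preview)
Your overall strategy matches the paper's: part (i) via stability of the parallel-transport ODE with respect to coefficients, and part (ii) via part (i) together with the support identity $\overline{\bigcup_m\mathrm{supp}(\mu_x^m(\nabla))}=\overline{\mathrm{Hol}_x(\nabla)}$ from Section 3. The paper phrases (ii) directly rather than by contradiction (from $\lim_k\int_K\mu_x^m(\nabla^k)=0$ and Portmanteau one gets $\int_K\mu_x^m(\nabla)=0$ for every such $K$, hence $\mathrm{supp}(\mu_x^m(\nabla))\subseteq H$ for all $m$), but this is only a cosmetic difference from your contrapositive.

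There is, however, one factual slip that generates an artificial difficulty. You write $\mu_x^m(\nabla)=(//_1\circ\tilde\Phi^m)_*\nu_m$ with $\nu_m$ the Gaussian-type kernel on all of $M^{m-1}$; that formula is the heuristic from the introduction, not Definition~\ref{def+}. The actual $\mu_x^m(\nabla)$ is the push-forward of $\frac{1}{p_1(x,x)}\prod_i p_{1/m}(x_i,x_{i+1})\prod_i dv_g(x_i)$ restricted to $\Lambda_g^m$, the set of tuples with consecutive points closer than half the injectivity radius. With the correct definition your ``main technical obstacle'' evaporates: on $\Lambda_g^m$ the broken-geodesic map $\Phi_m$ is genuinely continuous (Lemma~\ref{le3}), so there is no cut-locus set to excise and no need to argue about $\nu_m$-null sets. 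The paper's proof of (i) is then a single line: $//_1^k\circ\Phi_m\to //_1\circ\Phi_m$ in $C^0$ on $\Lambda_g^m$ by ODE stability, and the underlying heat-kernel measure is independent of $\nabla$.
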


Finally,   we apply Theorem \ref{main} to  flat $U(1)$-connections on Lagrangian submanifolds in   symplectic manifolds.
    Let $N$ be a compact  manifold of dimension $2n$ that admits  a symplectic  form $\omega$  representing the first Chern class $c_1(L)$ of a complex line bundle $L$.
   Hence there is a  $U(1)$-connection $\nabla$ on $L$  such that the curvature $F_\nabla=-2\pi \sqrt{-1}\omega$ by the Chern-Weil theory.  If $ M$ is a Lagrangian submanifold of $N$, then  the restriction of $\nabla$ to $M$  yields a flat $U(1)$-connection on $V=L|_M$.  $M$ is called   a Bohr-Sommerfeld Lagrangian submanifold if $\nabla|_M$ is gauge equivalent to  the trivial connection, i.e.,
    $\nabla|_M\cong d$  (cf. \cite{GS}).  Suppose that  there is  a smooth  family of Lagrangian submanifolds $M_t$, $t\in [0,1)$, such that $M_0$ is Bohr-Sommerfeld Lagrangian, and all  $M_t$ are  diffeomorphic   to $M$.  We view  $\nabla^t=\nabla|_{M_t}$ as connections  on the same manifold  $M \cong M_t$, $t\in [0,1)$, i.e.,  a family of flat $U(1)$-connections,   which satisfies  that $\nabla^t\rightarrow \nabla^0$ smoothly  when $t\rightarrow0$.  For a sequence $t_k \rightarrow 0$,  $\mu_x(\nabla^{t_k})\rightharpoonup \mu_x(\nabla^0) $ on $U(1)$ by Theorem \ref{main}.  Conversely, we obtain  the following  corollary.

     \begin{corollary}\label{pro-lag}    Let $(N, \omega)$ be an integral symplectic manifold, and let  $\nabla$ be  a $U(1)$-connection  with   curvature $F_\nabla=-2\pi \sqrt{-1}\omega$. If   $M_t$, $t\in [0,1)$, is a  family of Lagrangian submanifolds diffeomorphic to $M$  such that for any $\epsilon>0$,     $$\lim_{t \rightarrow 0}\int_{ \{e^{2\pi\sqrt{-1}\theta}\in U(1)| \epsilon<\theta<1-\epsilon\}} \mu_x(\nabla|_{M_t})= 0,$$ where $\mu_x(\nabla|_{M_t})$ are calculated via a Riemannian metric on $M$ and an  $x\in M$, then $M_0$ is a Bohr-Sommerfeld Lagrangian submanifold.
 \end{corollary}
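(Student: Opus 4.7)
The plan is to use Theorem \ref{main} to upgrade the integral hypothesis into a pointwise identity at $t=0$, and then exploit flatness of $\nabla|_{M_t}$ to recover the Bohr-Sommerfeld condition. Via a smooth family of diffeomorphisms $\phi_t:M\to M_t$ with $\phi_0=\mathrm{id}_{M_0}$, I would identify $\nabla^t:=\phi_t^\ast(\nabla|_{M_t})$ with a smoothly varying family of flat $U(1)$-connections on $V:=\phi_0^\ast(L|_{M_0})$ over the fixed manifold $M$; flatness comes from $\omega|_{M_t}=0$, and $\nabla^t\to\nabla^0$ in $C^0$ together with bundle-metric convergence. Theorem \ref{main} then yields $\mu_x(\nabla^{t_k})\rightharpoonup\mu_x(\nabla^0)$ weakly on $U(1)$ for any sequence $t_k\to 0$. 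Since each $K_\epsilon:=\{e^{2\pi\sqrt{-1}\theta}:\epsilon<\theta<1-\epsilon\}$ is open in $U(1)$, the Portmanteau theorem combined with the hypothesis gives $\mu_x(\nabla^0)(K_\epsilon)\le\liminf_{k}\mu_x(\nabla^{t_k})(K_\epsilon)=0$ for every $\epsilon>0$. Since $K_\epsilon\nearrow U(1)\setminus\{1\}$ as $\epsilon\downarrow 0$, this gives $\mu_x(\nabla^0)(U(1)\setminus\{1\})=0$, hence $\mu_x(\nabla^0)=\delta_1$.

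Next I would extract the flat-monodromy consequence. For a flat $U(1)$-connection the parallel transport along any smooth loop based at $x$ equals $\rho([\gamma])$ for the monodromy representation $\rho:\pi_1(M,x)\to U(1)$. The same should hold almost surely for the stochastic parallel transport of Section 2: a Brownian bridge $\gamma$ at $x$ is a.s.\ continuous and thus has a well-defined class $[\gamma]\in\pi_1(M,x)$, and by passing to the limit in the piecewise-geodesic approximations $\tilde\Phi^m$ used to construct $//_1(\gamma)$ and invoking flatness one obtains $//_1(\gamma)=\rho([\gamma])$ almost surely. Consequently
\[
\mu_x(\nabla^0)\;=\;\sum_{c\in\pi_1(M,x)}p_c\,\delta_{\rho(c)},\qquad p_c:=\mathbb{P}\bigl([\gamma]=c\bigr).
\]
By Stroock-Varadhan's support theorem for the Brownian bridge on a compact connected Riemannian manifold, every uniform neighborhood of a smooth representative $\gamma_0$ of $c$ has positive Wiener measure, and any loop sufficiently $C^0$-close to $\gamma_0$ is homotopic to it by shortest-geodesic interpolation. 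Hence $p_c>0$ for every $c$, and $\mu_x(\nabla^0)=\delta_1$ forces $\rho\equiv 1$; a flat $U(1)$-connection with trivial monodromy is gauge equivalent to the trivial connection, so $M_0$ is Bohr-Sommerfeld.

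The step I expect to be the main obstacle is verifying that stochastic parallel transport factors through $\pi_1(M,x)$ almost surely in the flat case, together with the strict positivity $p_c>0$ for every homotopy class. Both facts are classical in stochastic analysis on manifolds but are not developed inside the present paper, so a short auxiliary lemma would be required to interface Section 2's construction of $\mu_x(\nabla)$ with Stroock-Varadhan-type support results. An alternative route, keeping the argument internal to the paper, is to deduce $\mathrm{Hol}_x(\nabla^0)\subseteq\{1\}$ by combining Theorem \ref{thm3+}(i) with the weak support statement $\overline{\bigcup_m\mathrm{supp}(\mu_x^m(\nabla^0))}=\overline{\mathrm{Hol}_x(\nabla^0)}$ proved in Section 3, and then invoking the bijection between flat $U(1)$-connections modulo gauge and monodromy representations.
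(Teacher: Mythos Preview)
Your approach is essentially the same as the paper's proof, which first deduces $\mu_x(\nabla^0)=\delta_1$ from the hypothesis together with Theorem~\ref{main}, and then invokes Proposition~\ref{pro-flat} to conclude that all $\theta_\nu=0$. The obstacle you anticipate is already handled internally: Proposition~\ref{pro-flat} supplies the decomposition $\mu_x(\nabla^0)=\sum_{\nu}\delta_{e^{2\pi\sqrt{-1}\theta_\nu}}\int_{\Omega_x(\nu)}\mathcal{W}_x(g)$ (via Proposition~\ref{prop+++} with $F_\nabla=0$, so that $\tilde{//}_1$ is constant on each $\Omega_x(\nu)$), and the strict positivity of each weight is exactly equation~(\ref{eq-component}), obtained from the heat kernel on the universal cover---no appeal to Stroock--Varadhan is needed.
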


  In Section 4,   we  prove Theorem \ref{main}, Theorem \ref{thm3+}, and Corollary  \ref{pro-lag}, and
conduct a  detailed computation  for the case of    flat $U(1)$-connections, which provides  an explicit example   of Theorem \ref{main}.

{\bf Acknowledgments. } The author thanks the referees for pointing out an error.

\section{Preliminaries}
In this section,
we review some background materials about stochastic calculus on manifolds, mainly taken  from  the paper of B\"ar-Pf\"affle \cite{BP},  the  books \cite{Em,Evans}, and Section 3 in \cite{Dr1},  and  provide  the  rigorous definition of the measure   $ \mu_x(\nabla)$.   We also consult  the  articles \cite{EL, L}.

\subsection{Wiener measure}
Let $(M,g)$ be a compact Riemannian $n$-manifold, and let
   $p_s(x,y)$ denote   the heat kernel of the  Laplace operator $\Delta_g$, i.e.,  $p_s(x,y)$ solves the initial problem for  the heat equation   $$\frac{d p_s(x, y)}{d s}=\Delta_g p_s(x, y), \  \  \  {\rm and } \  \ \ p_s(x, y)dv_g\rightharpoonup \delta_y(x),$$  when $s\rightarrow 0$, where $\delta_y(x)$ is the Dirac delta function, and $dv_g$ is the volume measure  associated with  $g$.
    The heat kernel exhibits the following properties (cf. Section 3 in \cite{BP}):      \begin{itemize}
\item[i)] $p_s(x,y)>0$ and $p_s(x,y)=p_s(y,x)$.
\item[ii)] Stochastic completeness   $$\int_M p_s(x, y) dv_g(x)=1 .$$
\item[iii)] $$ p_{s_1+s_2}(x,y)=\int_M p_{s_1}(x,z)p_{s_2}(z,y)dv_g(z). $$
\item[iv)]   Asymptotic  expansion  $$ p_s(x, y) \sim \frac{1}{(4\pi s)^{n/2}}e^{-\frac{d_g^2(x,y)}{4s}}, $$ as   $s\ll 1$, where $d_g$ is the distance function of $g$.
 \end{itemize}

Let   $\Omega_x$ be   the loop space based  at a point  $x\in M$, i.e.,
  $$\Omega_x= \{\gamma| {\rm continuous \ curves} \  \gamma: [0,1]\rightarrow M \ {\rm with }\  x=\gamma(0)=\gamma(1)\}, $$ equipped with the $C^0$-topology.
  There is a unique Borel  measure  $ \mathcal{W}_x(g)$ called the   conditional Wiener measure (or pinned Wiener measure) on  $\Omega_x$ by  Corollary 2.20 in \cite{BP} satisfying  that for  a    partition  $0<s_1<\cdots <s_{m}<1$ and any open subsets $U_1, \cdots, U_{m}\subset M$,
   \begin{eqnarray*} & &\int_{\{\gamma\in\Omega_x|\gamma(s_1)\in U_1, \cdots, \gamma(s_{m})\in U_{m}\} }\mathcal{W}_x(g)\\ & = & \frac{1}{p_1(x,x)} \int_{U_1\times\cdots\times U_{m}}p_{1-s_{m}}(x, x_{m})
   \prod_{i=2}^{m} p_{s_i-s_{i-1}}(x_{i}, x_{i-1})p_{s_1}(x_1, x) \prod_{i=1}^{m}dv_g(x_i).\end{eqnarray*} Note that our definition differs slightly from that in    \cite{BP} as we further divide  the   conditional Wiener measure defined  in  \cite{BP} by an additional factor of   $p_1(x,x)$.  The measure
    $ \mathcal{W}_x(g)$ is a probability measure, i.e.,  $\int_{\Omega_s}\mathcal{W}_x(g)=1 $.  Furthermore,  $ \mathcal{W}_x(g)$ is supported in the space $C^\beta(\Omega_x)\subset \Omega_x$ of  $\beta $-H\"older  curves   for  $0\leq \beta < \frac{1}{2}$.    Unfortunately, the smooth loop space  $\Omega_x^{\infty}$ has  $ \mathcal{W}_x(g)$-measure zero.

  Note that if $M$ is not simply connected, i.e.,  the fundamental group $\pi_1(M)\neq  \{1\}$, then $\Omega_x$ has many connected components, and $$\Omega_x=\coprod_{\nu \in \pi_1(M)} \Omega_x(\nu) $$ where $\Omega_x(\nu) $ denotes the space of loops representing the class $\nu \in \pi_1(M)$.  If $f: \tilde{M}\rightarrow M$ is  the universal covering of $M$, and $f(y)=x$ for a $y\in \tilde{M}$,  then $\pi_1(M)$ acts on $\tilde{M}$, and  $x=f(\nu\cdot y)$ for any $\nu\in \pi_1(M)$. Let  $\tilde{g}=f^*g$,  $\tilde{p}_s(y_1, y_2)$ be the heat kernel of $\tilde{g}$, and $\tilde{\Omega}^y_{\nu\cdot y}$ be the space of continuous  curves $\tilde{\gamma}:[0,1]\rightarrow \tilde{M}$ with $\tilde{\gamma}(0)=y$ and $\tilde{\gamma}(1)=\nu\cdot y$.  The covering map $f$  induces a bijective continuous map from $\tilde{\Omega}^y_{\nu\cdot y}$ to $\Omega_x(\nu) $. Theorem 4.3 in \cite{BP} shows that $f_*\tilde{p}_1(y, \nu\cdot y) \mathcal{W}^y_{\nu\cdot y}(\tilde{g})= p_1(x,x)\mathcal{W}_x(g)|_{\Omega_x(\nu)}$ where $\mathcal{W}^y_{\nu\cdot y}(\tilde{g})$ denotes the conditional  Wiener measure on $\tilde{\Omega}^y_{\nu\cdot y}$ (cf. Corollary 2.20 in \cite{BP}) that is normalised such that  $ \int_{\tilde{\Omega}^y_{\nu\cdot y}}\mathcal{W}^y_{\nu\cdot y}(\tilde{g})=1$.  Therefore,  \begin{equation}\label{eq-component}\int_{\Omega_x(\nu)} \mathcal{W}_x(g)= \frac{\tilde{p}_1(y, \nu\cdot y) }{ p_1(x,x)}>0.  \end{equation}

   If $\mathcal{B}$ denotes the Borel $\sigma$-algebra of $\Omega_x$, then we define $\mathbb{P}_g: \mathcal{B} \rightarrow \mathbb{R}$,    by  $ B \mapsto \int_B \mathcal{W}_x(g),$ and $\mathbb{P}_g(\Omega_x)=1 $. Thus
        $(\Omega_x,  \mathcal{B},  \mathbb{P}_g)$ is a probability space.  Given $B\in \mathcal{B}$ with $ \mathbb{P}_g(B)>0$, the conditional probability is defined as $ \mathbb{P}_g(C|B)=\frac{\mathbb{P}_g(C\cap B)}{\mathbb{P}_g(B)}$ for any $C\in \mathcal{B}$.   If $  {\bf 1}_B$ is the characteristic   function of $B$, then $\mathbb{P}_g(C|B)= \int_C \frac{ {\bf 1}_B}{\mathbb{P}_g(B)} \mathcal{W}_x(g)$, i.e., the associated measure of $\mathbb{P}_g(\cdot|B)$ is $\frac{ {\bf 1}_B}{\mathbb{P}_g(B)} \mathcal{W}_x(g)$.
          For a  measurable function $\psi$ on $\Omega_x$,
   the  expectation of $\psi$ is denoted as   $$\mathbb{E}(\psi)=\int_{\Omega_x}\psi \mathcal{W}_x(g).$$

On the probability space $(\Omega_x,  \mathcal{B}, \{ \mathcal{B}_s\},  \mathbb{P}_g)$, the coordinate process is a family of measurable maps  $X_s: \Omega_x \rightarrow M$, $0\leq s \leq 1$, given by   $\gamma\mapsto X_s(\gamma)=\gamma(s)$ for any $\gamma \in \Omega_x$, and  is   called a  Brownian bridge from $x$ to $x$, where  $\{ \mathcal{B}_s\} $ is a filtration satisfying the `usual hypothesis' (cf. Section 3 in \cite{Dr1}).   Theorem 2.3 of  \cite{Dr2} asserts that $X_s$ is an $M$-valued semimartingale with respect  to $(\Omega_x,  \mathcal{B}, \{ \mathcal{B}_s\},  \mathbb{P}_g)$, which enable us  to quote  directly many results in Emery's book \cite{Em} by   extending   the time of  $X_s$ from $[0,1]$ to $[0,\infty)$ via  $X_s\equiv x$ for all $s\in [1, \infty)$.
If $ T: \Omega_x \rightarrow [0,1]$ is a stopping time, then the new process $X_s^{|T}$,  called $X_s$ stopped at $T$,  is defined as $X^{|T}_s(\gamma)=X_s(\gamma)$ if $s< T(\gamma)$, and $X^{|T}_{s}(\gamma)=X_{T(\gamma)}(\gamma)$ if $ T(\gamma)\leq s$.
 For two stopping times $S$ and $ T$ with $S\leq T$, the stochastic interval is   defined as $[[S, T]]=\{(\gamma, s)\in \Omega_x \times [0,1]| S(\gamma)\leq s \leq T(\gamma)\}$ (cf. (1.2) in Chapter 1 of \cite{Em}).

If $\alpha$  is a smooth 1-form on $M$ and $\gamma$ is a smooth curve, then the integral $\int_\gamma \alpha =\int_0^1 \alpha \langle  \dot{\gamma}(s)\rangle ds$ is well defined, where $\alpha \langle  \cdot \rangle$ denotes the pairing  of $\alpha$ with a vector.
The Stratonovich stochastic integral  of $\alpha$ along $X_s$ is a well-defined process $\int_0^s\alpha \langle \delta X_t\rangle$ (cf. Definition 7.3 in \cite{Em} and Definition 3.2 in \cite{Dr1}), i.e.,   a family of measurable functions on $(\Omega_x,  \mathcal{B}, \{ \mathcal{B}_s\},   \mathbb{P}_g)$, which can be approximated by the integrating  $\alpha$ along piecewise smooth curves as follows.
   By Proposition 7.13 in \cite{Em},  there is a measurable map $I: M\times M\times [0,1]\rightarrow M $, called an interpolation rule, satisfying the following.
     \begin{itemize}
\item[i)]  For any $(x,y)\in M$, the map $s \mapsto I(x, y,s)$ is a smooth curve,
\item[ii)] there is a neighbourhood $\Lambda$ of the diagonal of $M\times M$ such that  $I$ is smooth on $\Lambda\times [0,1]$,
\item[iii)] and for each $(x,y)\in \Lambda$, the curve $s \mapsto I(x, y,s)$ is a geodesic.
  \end{itemize}
  For the  partition  $\{s_i=i/m|i=0, 1, \cdots, m\}$ of $[0,1]$, we define  a process $X_s^m:  \Omega_x\times [0,1] \rightarrow M$ by  $X_s^m(\gamma) = I(X_{s_i}(\gamma), X_{s_{i+1}}(\gamma), m(s-s_i)), $ for any $s\in [s_i, s_{i+1}]$, $ i=0,\cdots, m-1$,  and any $\gamma \in \Omega_x$,  referred as  the interpolated process. Consequently   $X_s^m(\gamma)$ is piecewise smooth.  We regard $X_s^m$ as a measurable map $X^m:  \Omega_x \rightarrow \Omega_x^\infty$ given by $\gamma\mapsto X^m(\gamma)$ with $ X^m(\gamma)( s)= X^m_s(\gamma)$.
   Theorem 7.14 in \cite{Em} shows that $$  \mathbb{E}\big(\sup_{s\in [0,1]}\big|\int_0^s\alpha \langle \delta X_t\rangle-\int_0^s\alpha \langle \dot{X}_{t}^m\rangle dt\big|\big) \rightarrow 0,$$ when $m \rightarrow \infty$, i.e.,   $\int_0^s\alpha \langle \dot{X}_{t}^m\rangle dt$ converges to $\int_0^s\alpha \langle \delta X_t\rangle$ in probability.  Thus
   there is a full measure subset $ \Omega_x''\subset \Omega_x$, i.e. $\int_{\Omega_x''} \mathcal{W}_x(g)=1 $,  such that if $\gamma \in \Omega_x''$,  $\int_0^1\alpha \langle \delta X_s\rangle (\gamma) $ is a well-defined number, and $\int_0^1\alpha \langle \dot{X}_{s}^m(\gamma)\rangle ds \rightarrow \int_0^1\alpha \langle \delta X_s \rangle (\gamma) $ subsequently  when $m\rightarrow \infty$, i.e.,  $\int_0^1\alpha \langle \dot{X}_{s}^m\rangle ds\rightarrow\int_0^1\alpha \langle \delta X_s\rangle$ almost surely (cf. Theorem 1.32 in \cite{ELE}) by passing to  subsequences.  Certainly, we can assume that $\Omega_x''\subseteq C^\beta(\Omega_x)$ for any $0\leq \beta < \frac{1}{2}$.

   We remark that $ \int_0^1\alpha \langle \delta X_s  \rangle (\gamma)$ can be computed    by using smooth objects, which might hold  independent interest even though   not being  used in the proof of the main theorem.
   If $\gamma^o$ is a smooth curve homotopy to $\gamma \in \Omega_x''$, then  we apply the Stokes' theorem to a family of maps $\iota_m: D \rightarrow M$  such that $\iota_m|_{\partial D}=X_s^m(\gamma)\cdotp (\gamma^o)^{-1}$ where $D$ is the standard unit   disc in $\mathbb{C}$, i.e., $\int_{\iota_m(\partial D)} \alpha=\int_{\iota_m(D)}d \alpha$.   This leads us to  the following result.

    \begin{proposition}[Stokes' formula]\label{le--}  Suppose that $\alpha$ is a smooth 1-form on $M$ and  $\gamma\in \Omega''_x$. If $\gamma^o\in \Omega_x^\infty$ is  homotopy to $\gamma$, and
    $\iota: D \rightarrow M$ is a map from the unit disc in $ \mathbb{C}$ to $M$   such that $\iota$ is smooth in the interior of $D$, and   $\iota|_{\partial D}=\gamma\cdotp (\gamma^o)^{-1}$, then $$\int_0^1\alpha \langle \delta X_s \rangle (\gamma)= \int_{\gamma^o}\alpha+\int_{\iota(D)}d \alpha.  $$
 \end{proposition}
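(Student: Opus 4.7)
The plan is to reduce to classical Stokes' theorem by approximating $\gamma$ with the piecewise smooth interpolation $\gamma_m := X^m(\gamma)$ and then passing to the limit $m \to \infty$ along a subsequence. Fix $\gamma \in \Omega_x' \subseteq C^\beta(\Omega_x)$ for some $\beta < 1/2$ close to $1/2$. Since $\gamma$ is $\beta$-H\"older and the partition mesh is $1/m$, on each $[s_i, s_{i+1}]$ both $\gamma(s)$ and $\gamma_m(s)$ lie within distance $2 C m^{-\beta}$ of $\gamma(s_i)$; hence $\gamma_m \to \gamma$ uniformly as $m \to \infty$, and for $m$ large enough (depending on $\gamma$) the pair $(\gamma(s), \gamma_m(s))$ lies in the diagonal neighbourhood $\Lambda$ where the interpolation rule $I$ produces smooth geodesic segments.

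Using this, fix $\ve > 0$ and construct $\iota_m: D \to M$ agreeing with $\iota$ outside the thin upper collar $U_\ve = \{1 - \ve \leq |z| \leq 1,\, \Im z \geq 0\}$, and on $U_\ve$ defined by
\[
\iota_m(r e^{i\pi(1-s)}) := I\!\bigl(\iota\bigl((1-\ve)e^{i\pi(1-s)}\bigr),\; \gamma_m(s),\; \tfrac{r-(1-\ve)}{\ve}\bigr).
\]
Then $\iota_m$ is smooth in $\Int D$ and $\iota_m|_{\partial D} = \gamma_m \cdot (\gamma^o)^{-1}$. Applying classical Stokes' theorem to the smooth map $\iota_m$ gives
\[
\int_{\gamma_m}\alpha \;-\; \int_{\gamma^o}\alpha \;=\; \int_{\iota(D\setminus U_\ve)} d\alpha \;+\; \int_{U_\ve} \iota_m^*(d\alpha).
\]
The first term on the right tends to $\int_{\iota(D)}d\alpha$ as $\ve \to 0$ by dominated convergence (using integrability of $\iota^*(d\alpha)$ on $D$, implicit in the statement). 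The line integral $\int_{\gamma_m}\alpha = \int_0^1 \alpha\langle\dot X_s^m(\gamma)\rangle\,ds$ converges a.s.\ along a subsequence to $\int_0^1 \alpha\langle\delta X_s\rangle(\gamma)$ by Theorem~7.14 of \cite{Em} as quoted above.

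The main obstacle is controlling the collar contribution $\int_{U_\ve} \iota_m^*(d\alpha)$ in the iterated limit $m \to \infty$ followed by $\ve \to 0$. Geometrically, the deformation from $\iota$ to $\iota_m$ inserts a thin geodesic cylinder $C_{m,\ve}$ bridging $s \mapsto \iota((1-\ve)e^{i\pi(1-s)})$ to $\gamma_m$, with $t$-derivative of norm at most $d_g(\iota((1-\ve)e^{i\pi(1-s)}), \gamma_m(s))$, which tends to $0$ in either limit. Since $\gamma$ is nowhere differentiable a.s., a naive Jacobian bound on $\partial_s \iota_m$ fails; instead, the plan is to work in local normal coordinates along $\gamma$ and recognize $\int_{C_{m,\ve}} d\alpha$ as a L\'evy-area-type expression on the Brownian bridge, which vanishes almost surely along a diagonal subsequence $(m_k, \ve_k)$ chosen so that $\ve_k \to 0$ sufficiently slowly relative to $m_k \to \infty$. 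Combining the three limits yields $\int_0^1 \alpha\langle \delta X_s\rangle(\gamma) - \int_{\gamma^o}\alpha = \int_{\iota(D)}d\alpha$, which is the claim.
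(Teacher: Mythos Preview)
Your approach mirrors the paper's one-line sketch (apply Stokes to discs $\iota_m$ bounding $X^m(\gamma)\cdot(\gamma^o)^{-1}$, then let $m\to\infty$), and you go further by constructing $\iota_m$ explicitly from $\iota$ and isolating the analytic content in the collar term $\int_{U_\ve}\iota_m^*(d\alpha)$.

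The gap is that the vanishing of this collar term is not a side estimate but the entire content of the proposition. Applying Stokes to $\iota$ on the inner region $D\setminus U_\ve$ gives
\[
\int_{\iota(D\setminus U_\ve)}d\alpha=\int_{\sigma_\ve}\alpha-\int_{\gamma^o}\alpha+o(1),\qquad \sigma_\ve(s):=\iota\bigl((1-\ve)e^{i\pi(1-s)}\bigr),
\]
and substituting this into your displayed identity shows that the collar term equals $\int_{\gamma_m}\alpha-\int_{\sigma_\ve}\alpha+o(1)$. Thus its vanishing in the iterated limit is \emph{equivalent} to $\lim_{\ve\to0}\int_{\sigma_\ve}\alpha=\int_0^1\alpha\langle\delta X_s\rangle(\gamma)$, i.e.\ to the very statement being proved. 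Your proposed resolution---that the collar is a ``L\'evy-area-type expression\ldots which vanishes almost surely along a diagonal subsequence''---is not substantiated: the signed area between $\gamma$ and a family $\sigma_\ve\to\gamma$ in $C^0$ need not tend to zero without a bound of the form $\|\dot\sigma_\ve\|_\infty\cdot\|\gamma-\sigma_\ve\|_\infty\to 0$, and an arbitrary $\iota$ that is merely smooth on $\Int D$ gives no such control. Choosing $(m_k,\ve_k)$ diagonally does not help, since $\sigma_\ve$ is fixed by $\iota$ and bears no relation to the geodesic interpolations $\gamma_m$.
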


\subsection{Stochastic parallel transports}
Let $\pi: V \rightarrow M$ be
  a real  vector bundle   of rank $r$ on $M$ equipped with  a bundle  metric $h$,  and  a metric connection $\nabla$ on $V$.  If $\gamma$ is a  smooth curve in $M$, then a parallel section $A$ of $V$ along $\gamma$ satisfies the equation $\nabla A \langle \dot{\gamma} \rangle =0$.  Under local coordinates  $x^1, \cdots, x^n$ on an open subset  $U$  and  a trivialisation  of $V$,  $V|_{U} \cong U\times \mathbb{R}^r$, $\nabla= d+ \sum\limits_{ i=1}^{n}\Gamma_i dx^i$ and the equation reads $$\frac{d A_s}{d s} +\sum\limits_{ i=1}^n\Gamma_i(\gamma (s)) \dot{\gamma}^i(s)A_s=0$$ where $A_s=A(\gamma (s))$ is regarded as  a curve on $\mathbb{R}^r$, and $\Gamma_i: U \rightarrow {\rm Hom}(\mathbb{R}^r,\mathbb{R}^r)$ are smooth.  If  $\nabla^k=d+\Gamma^k$ is a family of connections such that $\Gamma^k \rightarrow\Gamma$ in the $C^0$-sense, and $A_s^k$ is a parallel section under $\nabla^k$ along $\gamma$ with the   initial values  $A^k_0\rightarrow A_0$ in $V_x$,    then $A_s^k$ converges to $A_s$, $s\in [0,1]$, when $k\rightarrow\infty$ by the stability of ordinary differential  equations with respect to variations   of coefficients.

   A parallel section $A$ of $V$ along the process  $X_s$ is a measurable map $A: \Omega_x\times [0,1]\rightarrow V$  satisfying the  stochastic differential   equation  \begin{equation}\label{eq-1}\nabla A_s \langle \delta X_s\rangle  =0,  \  \  \  \  \pi(A_0)=x,  \end{equation}
   in the Stratonovich  sense, and  $\gamma(s)=X_s(\gamma)=\pi (A_s(\gamma))$,  where $A_s(\gamma)=A(\gamma,s)$. The equation (\ref{eq-1}) can be expressed  in coordinates as follows.

      If
$ \mathcal{U}=\{U \}$
   is  a coordinate open covering      of $M$, then we apply  Lemma 3.5 in \cite{Em} to the process $Y_s=X_s $ if $s\in[0,1]$, and $Y_s \equiv x$ if $s\in [1, \infty)$. Hence
    there is  a sequence of stopping times $0=T_0 \leq T_1 \leq \cdots  \leq T_m \leq \cdots$ such that $\sup\limits_{m} T_m=\infty $,  and  for any $j$ and  any  $(\gamma, s)\in \mathcal{I}_j= [[S_j, S_{j+1}]]\cap \{(\gamma, s) \in
   \Omega_x\times [0,1] |S_j(\gamma) < S_{j+1}(\gamma) \}$, there is a   $U\in \mathcal{U}$ with $X_s(\gamma)=\gamma(s)\in U$ where $S_i=\min \{1, T_i\}$.
 On each   $U$,  the equation (\ref{eq-1}) reads   \begin{equation}\label{eq-1+1}dA_s+\sum_{ i=1}^n \Gamma_{ i}(X_{s})A_{s} \delta  X_{s}^i=0,   \end{equation} when $X_s(\gamma)=\gamma(s)\in U$, i.e.,  $(\gamma, s)\in  \mathcal{I}_j$,  or equivalently,  \begin{equation}\label{eq0}A_{s_1}=A_{s_0} - \sum_{ i=1}^n \int_{s_0}^{s_1}\Gamma_{ i}(X_{s})A_{s} \delta  X_{s}^i, \end{equation}  in the Stratonovich sense  (cf.  (8.12) in \cite{Em}) under the coordinates $x^1, \cdots, x^n$ on $U$ and the trivialisation $V|_U\cong U\times\mathbb{C}^r$.
 Now, suppose that  $x'^1, \cdots, x'^n$ are different  coordinates on $U'$ and $V|_{U'} \cong U'\times \mathbb{R}^r $ is another   trivialisation  of $V$, and the transition function is  $q: U \cap U' \rightarrow {\rm Hom} (\mathbb{R}^r, \mathbb{R}^r)$. Then $A_s'(\gamma)=q(\gamma(s))A_s(\gamma)$, $\sum\limits_{j}\Gamma'_jdx'^j=qdq^{-1}+q\sum\limits_{i}\Gamma_idx^i q^{-1}$,  $\delta  X_{s}'^j  =\sum\limits_{i} \frac{\partial x'^j}{\partial x^i} \delta  X_{s}^i$, and
  $$dA_s'+\sum_{j=1}^n \Gamma_{j}'(X_{s})A_{s}' \delta  X_{s}'^j  =   q(dA_s+\sum_{ i=1}^n \Gamma_{ i}(X_{s})A_{s} \delta  X_{s}^i)=0.$$

  Now we convert the equation  (\ref{eq-1+1})  into a  stochastic differential equation  of Itô sense. Denote $B(A_s, X_s)=-[\Gamma_{ 1}(X_{s})A_s,\cdots, \Gamma_{ n}(X_{s})A_s ]$ for  all $s\in[0,1]$, $B_i^\alpha=
  -\sum\limits_{\beta=1}^r\Gamma_{ \beta i}^\alpha(X_{s})a^\beta$, where $A_s=[a^1, \cdots, a^r]^T $,  and $B_i(A_s, X_s)=[B_i^1, \cdots, B_i^r]^T$, $i=1, \cdots, n$.  We rewrite  (\ref{eq-1+1}) as $$d\begin{bmatrix}
A_s \\
X_s
\end{bmatrix}= \begin{bmatrix}
B(A_s, X_s) \\
I_{n\times n}
\end{bmatrix}\delta   X_{s}, $$ which is equivalent to the  Itô stochastic differential equation
$$d\begin{bmatrix}
A_s \\
X_s
\end{bmatrix}= \begin{bmatrix}
b(A_s,s) \\
0
\end{bmatrix}ds+  \begin{bmatrix}
B(A_s, X_s) \\
I_{n\times n}
\end{bmatrix} \hat{d}  X_{s}, $$ where $\hat{d}$ denotes the Itô differential, and $I_{n\times n}$ is the identity matrix.  Section 6.5.6 in \cite{Evans} shows that if  $b(A_s,s)=[b^1, \cdots, b^r]^T$, then  $$b^\alpha=\frac{1}{2}\sum_{ i=1}^n(
\sum_{ \beta=1}^r\frac{\partial B^\alpha_i}{\partial a^\beta} B^\beta_i+\frac{\partial B^\alpha_i}{\partial x^i} )= \frac{1}{2}\sum_{ i=1}^n\sum_{ \beta=1}^r(\sum_{ \gamma=1}^r\Gamma_{ \beta i}^\alpha(X_{s})\Gamma_{ \gamma i}^\beta(X_{s})a^\gamma- \frac{\partial \Gamma_{ \beta i}^\alpha}{\partial x^i} (X_{s})a^\beta), $$ $\alpha=1, \cdots, r$.   Therefore, we obtain \begin{equation}\label{eq0+} dA_s=\frac{1}{2}\sum_{ i=1}^n  (\Gamma_{ i}(X_{s})\Gamma_{ i}(X_{s})-\frac{\partial \Gamma_{ i}}{\partial x^i} (X_{s}))A_{s}ds- \sum_{ i=1}^n \Gamma_{ i}(X_{s})A_{s} \hat{d}  X_{s}^i\end{equation} in the Itô sense.

  If $M$ is a torus and $V$ is a trivial bundle, i.e. $M=\mathbb{R}^n/\mathbb{Z}^n$ and $V\cong \mathbb{R}^n/\mathbb{Z}^n \times\mathbb{R}^r$,  then $\nabla=d+\sum\limits_{i}\Gamma_idx^i$, where $x^1,\cdots , x^n$ are angle coordinates, i.e.,  $x^i\in \mathbb{R}/\mathbb{Z}$. The equations (\ref{eq0}) and (\ref{eq0+}) are globally  defined, and   the pull-back equations on the universal covering  $\mathbb{R}^n  $ are respectively   $\mathbb{Z}^n$-periodic   Stratonovich and Itô equations driven by the coordinate  process on the probability space $(\coprod\limits_{\nu \in \mathbb{Z}^n}\Omega_{\mathbb{R}^n,\nu}^0, \mathcal{B}_{\mathbb{R}^n},  \mathbb{P}_{\mathbb{R}^n}) $, where $\Omega_{\mathbb{R}^n,\nu}^0$ is the space of continuous curves in $\mathbb{R}^n$ connecting $0$ and $\nu$, $\mathcal{B}_{\mathbb{R}^n}$ is the Borel $\sigma$-algebra, and $\mathbb{P}_{\mathbb{R}^n}$ is defined  by the conditional Wiener measure on $\Omega_{\mathbb{R}^n,\nu}^0$ by  Theorem 4.3 in \cite{BP}.
   Therefore,  we apply the theorems of existence, uniqueness,  and stability for solutions of  stochastic differential equations (cf.  Theorem 7 and Theorem 15 in Chapter V of \cite{P}, or
    Section 5.2.3-5.3 in \cite{Evans}) to   (\ref{eq0+}), and obtain the following conclusions.
    \begin{itemize}
\item[i)] For any $A_0 \in V_x$, there exists a unique  solution  $A$ satisfying (\ref{eq0+}) with initial value $A_0$.
  \item[ii)]   Suppose that $\nabla^k=d+\Gamma^k$ is a sequence of metric connections such that $\Gamma^k$ converges smoothly  to $\Gamma$ when $k\rightarrow\infty$.  If $A_s^k$ are  parallel section with respect to $\nabla^k$ with the  initial values    $A^k_0\rightarrow A_0$ in $V_x$, then $$\mathbb{E}\big(\sup_{s\in [0,1]} |A_s^k-A_s | \big)\rightarrow 0,$$ when $k\rightarrow\infty$, i.e.,  $A_s^k$ converges to $A_s$ in probability.
   \end{itemize}

In the general case,   the same arguments  as in the proofs of Proposition 8.13 and Proposition 8.15   in \cite{Em}
   prove the following  lemma by replacing the tangent bundle  $TM $ with $V$.

   \begin{lemma}\label{le+}  For any $A_0 \in V_x$, there exists a unique   parallel section $A_s$ of $V$ along $X_s$ with initial value $A_0$.   Furthermore,
          \begin{itemize}
\item[i)]    there is a  full measure subset $\Omega_x'$, i.e.,  $\int_{\Omega_x'}\mathcal{W}_x(g)=1 $, such that for any $\gamma \in \Omega_x'$ and all $s\in [0,1]$, the map $\tilde{//}_s(\gamma): V_{x}\rightarrow V_{\gamma(s)}$ given by $A_0 \mapsto A_s(\gamma)$ extends
    the parallel transport $//_s$ along smooth curves, called the stochastic  parallel transport.  Moreover, $\Omega_x' \subseteq C^\beta(\Omega_x)$ for any $0\leq \beta < \frac{1}{2}$.
  \item[ii)] Suppose that  $I$ is an interpolation rule obtained  by Proposition 7.13 in \cite{Em},  $\{s_i=i/m|i=0, 1, \cdots, m\}$ is  a partition  of $[0,1]$,   $X_s^m = I(X_{s_i}, X_{s_{i+1}}, m(s-s_i)), $ for $s\in [s_i, s_{i+1}]$, is the    interpolated process, and  $A_s^m$ is  the parallel section above the piecewise smooth $X_s^m$ with initial condition  $A_0^m=A_0$.  Then  $A_s^m$ converges to  $A_s$ in probability when $m\rightarrow\infty$.  Consequently, $ //_1(X_s^m) $ converges to  $ \tilde{//}_1$  in probability and thus    almost surely by passing to subsequences.     \end{itemize}
    \end{lemma}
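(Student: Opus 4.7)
The plan is to follow the strategy outlined in Propositions 8.13 and 8.15 of Emery's book, adapted from $TM$ to a general bundle $V$. The essential observation is that once we localise using the stopping-time sequence $\{T_j\}$ introduced before the lemma statement, the parallel transport equation (\ref{eq-1}) reduces on each stochastic interval $\mathcal{I}_j$ to the linear It\^o equation (\ref{eq0+}) driven by the coordinate components of the semimartingale $X_s$ in a chart $U \in \mathcal{U}$. Because $\Gamma_i$ is smooth on $U$ and $M$ is compact, the coefficients satisfy the standard Lipschitz and growth hypotheses, so classical existence and uniqueness theory (Theorem 7 of Chapter V of \cite{P}, or Section 5.2.3 of \cite{Evans}) yields a unique strong solution on $\mathcal{I}_j$ with any prescribed $\mathcal{B}_{S_j}$-measurable initial value.

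Next I would patch these local solutions into a global one. If $(\gamma,s)$ lies in two charts $U,U'$ with trivialisations related by $q: U\cap U' \to \hom(\mathbb{R}^r,\mathbb{R}^r)$, then the coordinate-change computation performed in the excerpt shows that $A_s' := q(X_s)A_s$ satisfies the analogous equation in the primed trivialisation, so the notion of solution is independent of the chart choice. Iterating across the intervals $\mathcal{I}_j$ produces a global parallel section $A_s$ on $[0,1]$, uniquely determined by $A_0 \in V_x$ and defined on a full measure subset of $\Omega_x$. Since the equation is linear in the fibre, running the construction on a basis $e_1,\dots,e_r$ of $V_x$ gives $r$ full-measure subsets whose intersection $\Omega_x'$ works simultaneously for every initial value; intersecting once more with $\bigcap_{\beta<1/2} C^\beta(\Omega_x)$ preserves the H\"older property. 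On $\Omega_x'$, the map $A_0 \mapsto A_s(\gamma)$ is linear and hence defines the operator $\tilde{//}_s(\gamma): V_x \to V_{\gamma(s)}$. Its agreement with the classical parallel transport along smooth curves follows because the Stratonovich integral reduces to the Riemann--Stieltjes integral on smooth paths, so (\ref{eq0}) becomes the classical ODE $\frac{d}{ds}A_s + \sum_i \Gamma_i(\gamma(s))\dot\gamma^i(s)A_s = 0$. Compatibility with $h$ is automatic because the vertical component of the equation lies in the Lie algebra $\mathfrak{o}(V_x,h_x)$.

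For part (ii), my plan is to combine the Wong--Zakai-type convergence of Stratonovich integrals along the interpolated process $X_s^m$ with a Gronwall estimate. On each interval $[s_i,s_{i+1}]$, $X_s^m$ is piecewise smooth, so $A_s^m$ is obtained by integrating the classical ODE. The key input is Theorem 7.14 in \cite{Em}, already invoked in the excerpt for $1$-forms, which gives that $\int_0^s \Gamma_i(X_t^m)A_t^m\, dX_t^{m,i}$ converges in probability to $\int_0^s \Gamma_i(X_t)A_t\,\delta X_t^i$. Writing both $A_s$ and $A_s^m$ in Stratonovich integral form on each $\mathcal{I}_j$ and applying a Gronwall-type estimate to the difference controls $\sup_{s\in[0,S_{j+1}]}|A_s^m - A_s|$ by the difference of the driving integrals. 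Iterating across the finitely many intervals that cover $[0,1]$ (almost surely) yields convergence in probability of $A_s^m$ to $A_s$, from which almost-sure convergence along a subsequence is extracted by Theorem 1.32 of \cite{ELE}.

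The main obstacle will be the patching step: one must check that the stopping times $S_j$ do not accumulate before $s=1$, which follows from compactness of $M$ together with continuity of $\mathcal{W}_x(g)$-almost every path, and that the coordinate-change identity at each transition really glues the local solutions into a single global semimartingale in $V$. Linearity of (\ref{eq0+}) in the fibre variable makes this much simpler than the nonlinear frame-bundle developments treated in \cite{Em}, where ensuring non-explosion of the solution requires a more delicate argument. In our setting, the fibrewise linear structure also guarantees that $\tilde{//}_s$ takes values in $O(V_x,h_x,V_{\gamma(s)},h_{\gamma(s)})$, as needed for the subsequent definition of $\mu_x(\nabla)$.
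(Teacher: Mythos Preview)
Your proposal is correct and follows exactly the approach the paper takes: the paper does not give a detailed proof but simply states that the lemma follows by the same arguments as in the proofs of Propositions 8.13 and 8.15 in \cite{Em}, replacing the tangent bundle $TM$ with $V$. Your expansion of those arguments via the stopping-time localisation, the local It\^o equation (\ref{eq0+}), the transition-function patching, and the Wong--Zakai approximation for part (ii) is precisely what those propositions entail.
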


  The stochastic holonomy  map is defined as  $$\tilde{//}_1:  \Omega_x' \rightarrow O(r).  $$  It is clear that ${\rm Hol}_x(\nabla)= {\rm Im} (//_1 )\subseteq {\rm Im}( \tilde{//}_1)\subseteq O(r)$, and the assertion ii) implies that $
 \overline{ {\rm Hol}_x(\nabla)}= \overline{{\rm Im}( \tilde{//}_1)}$.

     \begin{definition}[Wiener distribution]\label{def}
         Define a measure  $$ \mu_x(\nabla)=(\tilde{//}_1)_* \mathcal{W}_x(g) $$ on $O(r)$, i.e.,    the push-forward  Wiener measure by   $\tilde{//}_1$.
  \end{definition}

 The measure   $ \mu_x(\nabla)$ depends  on the choices of the connection  $\nabla$, the Riemannian metric  $g$, and the base point $x$ in $M$.

    In the case of $V$ being the trivial complex line bundle on $M$, i.e.,  $V\cong M\times \mathbb{C}$,  a $U(1)$-connection reads
        $\nabla=d+\Gamma$ where $\Gamma$ is a pure imaginary 1-form on $M$ with curvature $d\Gamma$.
    The parallel section $A_s$ along $X_s$
    solves the equation $dA_s+\Gamma\langle \delta X_s \rangle A_s =0$ in the Stratonovich sense, and the solution is
      $A_s=A_0\exp(-\int_0^s \Gamma\langle  \delta  X_t \rangle  ).$
      Thus,  the parallel transport is the exponential of the  Stratonovich integral  $$\tilde{//}_s =\exp(-\int_0^s \Gamma\langle \delta X_t\rangle  ).$$

      Furthermore,    we have a  Stokes type  formula for  general   complex line bundles.

    \begin{proposition}\label{prop+++}  Let $V$ be a complex line bundle on $M$,  $\nabla$ be a $U(1)$-connection, and $F_\nabla$ be the curvature of $\nabla$.  Suppose that
      $\gamma\in \Omega'_x$ satisfies  that  $\tilde{//}_1(\gamma)$ is well-defined, and  $//_1(X_s^m(\gamma))\rightarrow \tilde{//}_1(\gamma)$ when $m\rightarrow\infty$.  If
       $\gamma^o\in \Omega_x^\infty$ is  homotopy to $\gamma$,  then $$\tilde{//}_1(\gamma)=//_1(\gamma^o)\exp(-\int_{\iota(D)}F_\nabla), $$ where
    $\iota: D \rightarrow M$ is a map from the unit disc $D$ in $ \mathbb{C}$ to $M$   such that $\iota$ is smooth in the interior of $D$, and   $\iota|_{\partial D}=\gamma\cdotp (\gamma^o)^{-1}$.
 \end{proposition}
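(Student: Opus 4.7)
The plan is to reduce the statement to Proposition \ref{le--} via a pull-back argument, first handling the trivial line bundle case and then the general case. When $V \cong M \times \CC$ is trivial with $\nabla = d + \Gamma$ for a smooth pure imaginary globally defined $1$-form $\Gamma$, the closed formula $\tilde{//}_1(\gamma) = \exp\bigl(-\int_0^1 \Gamma\langle \delta X_s\rangle(\gamma)\bigr)$ recorded just after Definition \ref{def} together with Proposition \ref{le--} applied to $\alpha = \Gamma$ yield
\[
\tilde{//}_1(\gamma) = \exp\Bigl(-\int_{\gamma^o}\Gamma - \int_{\iota(D)} d\Gamma\Bigr) = //_1(\gamma^o)\,\exp\Bigl(-\int_{\iota(D)} F_\nabla\Bigr),
\]
using $//_1(\gamma^o) = \exp(-\int_{\gamma^o}\Gamma)$ and $d\Gamma = F_\nabla$. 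This settles the trivial case at once.

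For a general complex line bundle, I would reduce to the trivial setting by pulling back via $\iota$. Since $D$ is contractible and $\iota$ is smooth on $\Int(D)$, the pull-back bundle $\iota^*V$ is trivial on $D$, and in a chosen trivialization one has $\iota^*\nabla = d + \tilde\Gamma$ on $\Int(D)$ for some smooth pure imaginary $1$-form $\tilde\Gamma$ with $d\tilde\Gamma = \iota^*F_\nabla$. For each $r \in (0, 1)$, the smooth loop $\sigma_r(s) = r e^{2\pi \mn s}$ lies entirely in $\Int(D)$; pull-back compatibility of smooth parallel transport together with the classical smooth Stokes theorem give
\[
//_1^V(\iota \circ \sigma_r) = \exp\Bigl(-\oint_{|z| = r} \tilde\Gamma\Bigr) = \exp\Bigl(-\int_{D_r} \iota^* F_\nabla\Bigr) \to \exp\Bigl(-\int_{\iota(D)} F_\nabla\Bigr) \quad \text{as } r \to 1^-,
\]
where $D_r = \{z \in D \mid |z| \le r\}$ and the last convergence uses integrability of $\iota^*F_\nabla$ on $D$.

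The main step, and the principal obstacle, is to identify $\lim_{r \to 1^-} //_1^V(\iota \circ \sigma_r)$ with $\tilde{//}_1(\gamma)\cdot//_1(\gamma^o)^{-1}$. I would compare the smooth approximation $\iota \circ \sigma_r$ with the piecewise geodesic approximation $X^m(\gamma)\cdot(\gamma^o)^{-1}$ from Lemma \ref{le+}(ii), for which convergence of parallel transport to $\tilde{//}_1(\gamma)\cdot//_1(\gamma^o)^{-1}$ is granted by the hypothesis of the proposition. Both families are smooth loops converging in $C^0$ to $\gamma \cdot (\gamma^o)^{-1}$, so by the classical curvature formula for smooth $U(1)$-loops recalled in the introduction their parallel transports differ by $\exp\bigl(-\int_{H_{m,r}} F_\nabla\bigr)$, where $H_{m,r}$ is a thin smooth homotopy that I would construct by minimal geodesic interpolation via the rule from Proposition 7.13 of \cite{Em}. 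The delicate point is that $\int_{H_{m,r}} F_\nabla \to 0$ cannot be obtained from a naive length-times-width bound, since a Brownian bridge sample has infinite length. Instead, covering $\iota(D)$ by finitely many trivializing charts of $V$, Stokes' theorem in each chart rewrites $\int_{H_{m,r}} F_\nabla$ as a difference of line integrals of the local connection $1$-forms $\Gamma_\alpha$ along the two boundary loops of $H_{m,r}$; the Stratonovich convergence $\int_0^1 \alpha \langle \dot{X}_t^m \rangle\, dt \to \int_0^1 \alpha \langle \delta X_t \rangle$ recalled in Section 2 (together with an analogous smooth convergence statement for $\iota \circ \sigma_r$) then makes the difference vanish in the joint limit $m \to \infty$, $r \to 1^-$, completing the identification.
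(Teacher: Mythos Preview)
Your trivial-bundle argument via Proposition~\ref{le--} is correct and clean. The paper's proof, however, is simpler and uniform across all complex line bundles, with no case split and no pull-back: it applies the smooth $U(1)$-holonomy formula from the introduction directly to the piecewise-geodesic approximants $X_s^m(\gamma)$, obtaining
\[
//_1(X_s^m(\gamma))\;=\;//_1(\gamma^o)\,\exp\Bigl(-\int_{\iota_m(D)}F_\nabla\Bigr)
\]
for discs $\iota_m$ with $\iota_m|_{\partial D}=X_s^m(\gamma)\cdot(\gamma^o)^{-1}$, and then lets $m\to\infty$. The left side tends to $\tilde{//}_1(\gamma)$ by hypothesis; on the right one chooses $\iota_m$ agreeing with $\iota$ away from a thin collar so that $\int_{\iota_m(D)}F_\nabla\to\int_{\iota(D)}F_\nabla$. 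This one step handles the general bundle without ever invoking Proposition~\ref{le--} or a second approximating family.

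Your general-case argument, by contrast, has a genuine gap in the final step. Introducing the auxiliary family $\iota\circ\sigma_r$ forces a comparison of two distinct approximating sequences, and the mechanism you propose---cutting $H_{m,r}$ into chart-sized pieces and applying Stokes to each $\Gamma_\alpha$---does not yield what you claim. Stokes on each piece produces boundary terms along the internal cuts as well as along the two outer loops; since the local connection forms differ on overlaps by $d\log q_{\alpha\beta}$, these internal contributions do not cancel, and you are not left with a clean ``difference of line integrals along the two boundary loops of $H_{m,r}$''. Moreover, the Stratonovich convergence you invoke from Section~2 is stated only for globally defined $1$-forms on $M$, not for the local $\Gamma_\alpha$, so it does not directly control these local line integrals either. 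The entire detour is unnecessary: once you write $//_1(X_s^m(\gamma))\cdot//_1(\gamma^o)^{-1}=\exp(-\int_{\iota_m(D)}F_\nabla)$ from the smooth formula, the conclusion follows immediately from the hypothesis $//_1(X_s^m(\gamma))\to\tilde{//}_1(\gamma)$---which is precisely the paper's route.
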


  \begin{proof}[Sketch of  Proof]   It is a direct consequence of $//_1(X_s^m(\gamma))=//_1(\gamma^o)\exp(-\int_{\iota_m(D)}F_\nabla) $, where
    $\iota_m: D \rightarrow M$ is a family of maps  such that $\iota_m|_{\partial D}=X_s^m(\gamma)\cdotp (\gamma^o)^{-1}$.
   \end{proof}

   We remark that the phase factor $\exp(-\int_{\iota(D)}F_\nabla)$ is independent of the choice of $\iota$, since  $$\exp(-\int_{\tilde{\iota}(S^2)}F_\nabla)=\exp (2\pi \sqrt{-1}  \int_{\tilde{\iota}(S^2)}c_1(L))=1  $$   for any map $\tilde{\iota}:S^2\rightarrow M$ whose restriction on the northern  hemisphere  is $\iota$, where $c_1(L)$ is the first Chern class of $L$.

   One unsatisfactory fact about  the  stochastic holonomy map  is that  $\tilde{//}_1$ may   not be continuous. Therefore, $ \mu_x(\nabla)$ is only the push-forward of the Wiener measure by measurable, not necessarily continuous, maps.

  \section{Finite dimensional approximation} Now we introduce  the approximation measures $\mu_x^m(\nabla)$, which are the push-forward measures from  finite dimensional  manifolds by continuous maps.  The construction depends on
   certain  finite dimensional approximations of conditional Wiener measures that bear more resemblance to      those used   in   \cite{DWM}   than the approximation obtained    in  \cite{AD}.
  Generalising  the approximation schemes developed  in  \cite{AD} to  Brownian bridges  might lead to the same approximation measures $\mu_x^m(\nabla)$, which would be left for further studies.

   If inj$(M,g)$ denotes    the  injectivity radius of $(M,g)$,
  then for any two points  $x$ and $y\in M$ with $d_g(x,y)< \rho=\frac{1}{2}$inj$(M,g)$, there is a unique minimal  geodesic connecting $x$ and $y$. For any $m>0$, we consider the partition $\{s_i=i2^{-\nu}| i=0, \cdots, 2^\nu=m\}$ of $[0,1]$. Let $ \Lambda_g^m$ be the open  subset of  $M^{m-1}$ consisting of points $(x_1, \cdots, x_{m-1})\in M^{m-1}$ such that $ d_g(x_i, x_{i+1})<\rho $,     $ i=0, \cdots, m-1$, where $x=x_0=x_m$.   For any $m=2^\nu$,  we define an  embedding $$\Phi_m:  \Lambda_g^m \rightarrow \Omega_x^\infty   $$ such that  $\Phi_m(x_1, \cdots, x_{m-1})$ is the piecewise geodesic given  by the interpolation   $I(x_i, x_{i+1}, m(s-s_i)) $ for $s\in[s_i, s_{i+1}]$,   i.e.,   the unique minimal  geodesic connecting $x_i$ and $ x_{i+1}$.   There are natural embedding  $  \Lambda_g^m \hookrightarrow \Lambda_g^{2m}$ by $(x_1, \cdots, x_{m-1})\mapsto (y_1, \cdots, y_{2m-1})$ where $y_{2i}=x_i$,  $i= 1, \cdots, m-1$, and $y_{2i+1}=I(x_i, x_{i+1}, \frac{1}{2})$. It is clear that  the restriction $$ \Phi_{2m}|_{ \Lambda_g^m}=\Phi_m.$$

   \begin{lemma}\label{le3} For   $m=2^\nu$, the maps $$//_1 \circ \Phi_m:  \Lambda_g^m \rightarrow O(r)$$ are  continuous.  Moreover,  $ \cdots \subseteq \Phi_m( \Lambda_g^m)\subseteq \Phi_{2m}( \Lambda_g^{2m}) \subseteq \cdots \subseteq  \Omega_x^\infty$, and   $$\overline{ \bigcup_{ m\in \mathbb{N}}//_1 \circ\Phi_m( \Lambda_g^m)} =\overline{ {\rm Hol}_x(\nabla)}.$$
      \end{lemma}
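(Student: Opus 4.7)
The plan is to address the three assertions in turn. Continuity reduces to smooth dependence of minimal geodesics on endpoints combined with ODE stability of linear parallel transport. The nested inclusion is established by midpoint subdivision. The closure equality is the substantive part, and follows from approximating a smooth loop by its piecewise-geodesic inscribed loops and passing to the limit in the parallel transport.

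For continuity, on $\Lambda_g^m$ each consecutive pair $(x_i, x_{i+1})$ lies within injectivity radius $\rho$, so the minimal geodesic is unique and depends smoothly on the endpoints via $\exp_{x_i}^{-1}$. The parallel transport along each segment is the value at $s=1$ of the fundamental solution of the linear ODE $\nabla_{\dot\gamma}A=0$, which depends continuously (in fact smoothly) on the driving curve; composing over the $m$ segments gives a continuous map $//_1\circ\Phi_m:\Lambda_g^m\to O(r)$. For the nested inclusion, set $y_{2i}=x_i$ and $y_{2i+1}=I(x_i,x_{i+1},1/2)$. Since $d_g(y_{2i},y_{2i+1})=\tfrac12 d_g(x_i,x_{i+1})<\rho$, the tuple $(y_1,\dots,y_{2m-1})$ lies in $\Lambda_g^{2m}$; and because the two halves of the minimal geodesic from $x_i$ to $x_{i+1}$ (appropriately reparametrized by a factor of $2$) concatenate back to the original segment, $\Phi_{2m}(y_1,\dots,y_{2m-1})=\Phi_m(x_1,\dots,x_{m-1})$ as parametrized curves, giving $\Phi_m(\Lambda_g^m)\subseteq \Phi_{2m}(\Lambda_g^{2m})$.

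For the closure equality, the forward inclusion is automatic since $\Phi_m(\Lambda_g^m)\subseteq \Omega_x^\infty$ implies $//_1\circ\Phi_m(\Lambda_g^m)\subseteq{\rm Hol}_x(\nabla)$. Conversely, given $//_1(\gamma)\in{\rm Hol}_x(\nabla)$ with $\gamma\in\Omega_x^\infty$, set $x_i^m=\gamma(i/m)$; smoothness of $\gamma$ yields $d_g(x_i^m,x_{i+1}^m)\le m^{-1}\|\dot\gamma\|_{C^0}<\rho$ for $m$ large, so $\gamma_m=\Phi_m(x_1^m,\dots,x_{m-1}^m)$ is defined in $\Omega_x^\infty$. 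The $\gamma_m$ converge uniformly to $\gamma$ with speeds uniformly bounded by $\|\dot\gamma\|_{C^0}$, and the main obstacle is to establish the convergence of parallel transports $//_1(\gamma_m)\to //_1(\gamma)$ in $O(r)$. This follows by covering $\gamma([0,1])$ by finitely many trivialising charts, writing the transport equation on each as a linear ODE $\dot A+\Gamma\langle\dot\gamma\rangle A=0$, and applying a Gr\"onwall comparison using that the geodesic chord approximates $\gamma|_{[s_i,s_{i+1}]}$ to $O(m^{-2})$ in $C^0$ with matching tangent at $s_i$ to leading order; the per-segment discrepancies accumulate to a total of order $O(m^{-1})$, which vanishes as $m\to\infty$. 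Hence $//_1(\gamma)\in\overline{\bigcup_m //_1\circ\Phi_m(\Lambda_g^m)}$, completing the equality of closures.
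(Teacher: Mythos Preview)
Your proof is correct and follows essentially the same strategy as the paper: continuity via ODE stability, then approximation of an arbitrary smooth loop $\gamma$ by its inscribed piecewise-geodesic loops $\gamma_m$, with convergence of parallel transports. The only packaging difference is that the paper introduces the $H^1$-norm on $\Omega_x^\infty$ (via an embedding $M\hookrightarrow\mathbb{R}^N$), proves once that $//_1:\Omega_x^\infty\to O(r)$ is $H^1$-continuous, and then checks $\|\gamma-\gamma_m\|_{H^1}\to 0$; you instead establish $//_1(\gamma_m)\to //_1(\gamma)$ directly by a chart-wise Gr\"onwall estimate, which is the same mechanism made explicit rather than abstracted.
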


  \begin{proof} We embed $M$ in a Euclidean space $\mathbb{R}^N$, and regard a curve $\gamma \in  \Omega_x^\infty$ as a smooth function mapping   from $[0,1]$ to $\mathbb{R}^N$. The first Sobolev norm induces a metric on $\Omega_x^\infty$ given by  $\|\gamma\|_{H^1}^2=\int_0^1|\dot{\gamma}|^2ds$, which is not complete. The stabilities   of the  ordinary differential equations with respect to boundary data and  coefficients imply  that
  $\Phi_m: \Lambda_g^m \rightarrow  \Omega_x^\infty$ and
    the holonomy maps $//_1:\Omega_x^\infty \rightarrow O(r)$  are  continuous.
     Therefore  $//_1 \circ \Phi_m:  \Lambda_g^m \rightarrow O(r)$ are continuous.

       Given a $\gamma\in \Omega_x^\infty$, $|\gamma(s)-\gamma(s')|\leq c_1|s-s'|$ where $c_1\geq |\dot{\gamma}|$, and by taking $|s-s'|< c_1^{-1}\rho$, $\gamma(s)$ and $\gamma(s')$ can be connected by a unique minimal geodesic.
       Thus,   there is a $\gamma^m
     \in \Phi_m(  \Lambda_g^m)$ such that  $\gamma^m(s_i)=\gamma(s_i)$.   It is clear that $\gamma^m$ converges to $\gamma$ in the $H^1$-sense, i.e.
      $\|\gamma-\gamma^m\|_{H^1}^2\rightarrow 0$ when $m\rightarrow\infty$.  Consequently,
      $\Omega_x^\infty =\overline{ \bigcup\limits_{ m\in \mathbb{N}}\Phi_m( \Lambda_g^m)}, $ where the closure is taken with respect to the $H^1$-norm,  and therefore,   $//_1 (\overline{ \bigcup_{ m\in \mathbb{N}}\Phi_m( \Lambda_g^m)}) = {\rm Hol}_x(\nabla).$
      We obtain the conclusion by the continuity of  $//_1$.
  \end{proof}

  \begin{lemma}\label{le4} For any $\varepsilon >0$, there is an $m_0>1$ such that when $m>m_0$, $$1-\varepsilon \leq \frac{1}{p_1(x,x)}\int_{\Lambda_g^m} \prod_{i=0}^{m-1} p_{\frac{1}{m}}(x_{i}, x_{i+1})\prod_{i=1}^{m-1}dv_g(x_i) \leq 1$$ where $x=x_0=x_m$.
      \end{lemma}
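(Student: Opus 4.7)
The plan is to split into an upper bound (immediate) and a lower bound (the main content). For the upper bound, iterating the Chapman--Kolmogorov identity (property (iii)) gives
\[
\int_{M^{m-1}} \prod_{i=0}^{m-1} p_{1/m}(x_i,x_{i+1}) \prod_{i=1}^{m-1} dv_g(x_i) = p_1(x,x),
\]
so dividing by $p_1(x,x)$ yields exactly $1$. Since $\Lambda_g^m \subseteq M^{m-1}$ and the integrand is non-negative, the upper bound $\leq 1$ is immediate.

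For the lower bound, the task is to show that the integral over the complement $M^{m-1} \setminus \Lambda_g^m$ is $o(1)$ as $m \to \infty$. Writing this complement as the union of the $m$ ``bad slabs'' $E_j = \{(x_1,\ldots,x_{m-1}) \in M^{m-1} : d_g(x_j, x_{j+1}) \geq \rho\}$ for $j = 0, \ldots, m-1$, a union bound reduces the problem to estimating each single-slab integral. For a fixed $j$ with $1 \leq j \leq m-2$, I would integrate out all other variables using Chapman--Kolmogorov to collapse the expression to
\[
\int_{\{d_g(x_j, x_{j+1}) \geq \rho\}} p_{j/m}(x, x_j)\, p_{1/m}(x_j, x_{j+1})\, p_{(m-j-1)/m}(x_{j+1}, x)\, dv_g(x_j)\, dv_g(x_{j+1}),
\]
with the obvious modification at $j \in \{0, m-1\}$ where one of the outer factors is absent.

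The key estimate is a standard Gaussian upper bound for the heat kernel on a compact Riemannian manifold: there exist constants $C, c > 0$ depending only on $(M,g)$ with $p_s(y,z) \leq C s^{-n/2} e^{-c d_g^2(y,z)/s}$ for $s$ small. On the slab $\{d_g(x_j, x_{j+1}) \geq \rho\}$ with $s = 1/m$, this yields $p_{1/m}(x_j, x_{j+1}) \leq C m^{n/2} e^{-c\rho^2 m}$, which is exponentially small in $m$. Pulling this bound out of the integral and using stochastic completeness (property (ii)) to handle the remaining kernels shows that each $E_j$-integral is bounded by $C m^{n/2} e^{-c\rho^2 m}$. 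Summing over the $m$ values of $j$ gives a total bound of $C m^{n/2+1} e^{-c\rho^2 m}$ for the complement integral; dividing by the positive constant $p_1(x,x)$ preserves the decay, so picking $m_0$ large enough makes the deficit below $\varepsilon$ for all $m > m_0$. The only non-routine ingredient is the Gaussian upper bound, which is classical for closed manifolds; everything else is bookkeeping with the semigroup property.
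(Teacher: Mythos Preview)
Your proof is correct and follows essentially the same line as the paper's: both use the semigroup identity for the upper bound and estimate the complement integral by bounding one off-diagonal heat-kernel factor via a Gaussian upper bound, then absorb the remaining factors with stochastic completeness. Your version is in fact slightly tidier---the explicit union bound over the $m$ bad slabs (contributing a harmless extra factor of $m$) and the appeal to a global Gaussian upper bound rather than the near-diagonal asymptotic~(iv) make the argument fully rigorous where the paper's presentation is a touch informal.
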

   \begin{proof}
If $(x_1, \cdots, x_{m-1})\in M^{m-1}\backslash \Lambda_g^m$, then there exists a $j$, $0\leq j\leq m-1$, such that  $d_g(x_j, x_{j+1})\geq \rho$, and  $$p_{\frac{1}{m}}(x_j, x_{j+1})\leq 2 (\frac{m}{4\pi})^{\frac{n}{2}}e^{-\frac{m\rho^2}{4}} ,$$ for $m\gg 1$.
We calculate   \begin{eqnarray*} & &\int_{ M^{m-1}\backslash \Lambda_g^m}\prod_{i=0}^{m-1} p_{\frac{1}{m}}(x_{i}, x_{i+1})\prod_{i=1}^{m-1}dv_g(x_i)\\  & \leq & 2(\frac{m}{4\pi})^{\frac{n}{2}}e^{-\frac{m\rho^2}{4}} \int_{ M^{m-1}} \prod_{i=0, i\neq j}^{m-1} p_{\frac{1}{m}}(x_{i}, x_{i+1})\prod_{i=1}^{m-1}dv_g(x_i) \\ & = & 2(\frac{m}{4\pi})^{\frac{n}{2}}e^{-\frac{m\rho^2}{4}}  \int_{ M} p_{\frac{j}{m}}(x, x_{j})dv_g(x_j) \int_{ M} p_{1-\frac{j+1}{m}}(x, x_{j+1})dv_g(x_{j+1})\\ &\leq & 2 (\frac{m}{4\pi})^{\frac{n}{2}}e^{-\frac{m\rho^2}{4}}  \rightarrow 0, \end{eqnarray*}
when $m\rightarrow\infty$.        The conclusion follows from      $$\int_{M^{m-1}} \prod_{i=0}^{m-1} p_{\frac{1}{m}}(x_{i}, x_{i+1})\prod_{i=1}^{m-1}dv_g(x_i)=p_1(x,x). $$
 \end{proof}

 Heuristically, $$(\Lambda_g^m,  \frac{1}{p_1(x,x)}\prod_{i=0}^{m-1} p_{\frac{1}{m}}(x_{i}, x_{i+1})\prod_{i=1}^{m-1}dv_g(x_i)) '\rightarrow' (\Omega_x, \mathcal{W}_x(g))$$ when $m\rightarrow\infty$, i.e.,  the Wiener measure $\mathcal{W}_x(g)$ is approximated by the  measures on the $(m-1)$-dimensional manifolds $\Lambda_g^m$.

  \begin{definition}[Approximation measure]\label{def+}
 Define a measure  $$  \mu_x^m(\nabla)=(//_1 \circ \Phi_m)_* \frac{1}{p_1(x,x)} \prod_{i=0}^{m-1} p_{\frac{1}{m}}(x_{i}, x_{i+1})\prod_{i=1}^{m-1}dv_g(x_i) $$ on $O(r)$.
    \end{definition}

   Since  $//_1 \circ \Phi_m$ is continuous, $ {\rm supp} (\mu_x^m(\nabla))=\overline{//_1 \circ\Phi_m( \Lambda_g^m)}$, and
    \begin{equation}\label{eq-supp}\overline{ \bigcup_{ m\in \mathbb{N}}{\rm supp} (\mu_x^m(\nabla))} =\overline{ {\rm Hol}_x(\nabla)}\end{equation} by Lemma \ref{le3}.

      \begin{theorem}\label{thm+2}  When $m\rightarrow\infty$, $ \mu_x^m(\nabla)$ converges to $ \mu_x(\nabla)$ on $O(r)$ in the distribution sense.
       \end{theorem}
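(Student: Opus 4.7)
The plan is to rewrite $\int_{O(r)} \psi\, \mu_x^m(\nabla)$ as an integral over $\Omega_x$ against the Wiener measure $\mathcal{W}_x(g)$, using the finite-dimensional marginal formula recalled in Section 2, and then apply Lemma \ref{le+}(ii) together with bounded convergence. Fix a continuous $\psi: O(r) \to \mathbb{R}$, and consider the evaluation map $\pi_m: \Omega_x \to M^{m-1}$, $\gamma \mapsto (\gamma(s_1),\ldots, \gamma(s_{m-1}))$ for $s_i = i/m$. The defining formula of $\mathcal{W}_x(g)$ in Section 2 says that
$$(\pi_m)_* \mathcal{W}_x(g) = \frac{1}{p_1(x,x)} \prod_{i=0}^{m-1} p_{1/m}(x_i, x_{i+1}) \prod_{i=1}^{m-1} dv_g(x_i)$$
on $M^{m-1}$, with $x_0 = x_m = x$. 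Comparing with Definition \ref{def+}, this yields
$$\int_{O(r)} \psi\, \mu_x^m(\nabla) = \int_{\pi_m^{-1}(\Lambda_g^m)} \psi\bigl(//_1(\Phi_m(\pi_m(\gamma)))\bigr) \, \mathcal{W}_x(g).$$

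Next, I would choose the interpolation rule $I$ provided by Proposition 7.13 of \cite{Em} so that, on the neighbourhood $\Lambda = \{(y,z)\in M\times M : d_g(y,z) < \rho\}$ of the diagonal (where $\rho = \tfrac12 \mathrm{inj}(M,g)$ guarantees uniqueness), $I(y,z,\cdot)$ is the unique minimising geodesic from $y$ to $z$. With this choice, on the event $\{\pi_m(\gamma) \in \Lambda_g^m\}$ each consecutive pair $(X_{s_i}(\gamma), X_{s_{i+1}}(\gamma))$ lies in $\Lambda$, so the interpolated process from Lemma \ref{le+}(ii) satisfies $X^m(\gamma) = \Phi_m(\pi_m(\gamma))$. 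Combining with the previous display,
$$\int_{O(r)} \psi\, \mu_x^m(\nabla) = \int_{\pi_m^{-1}(\Lambda_g^m)} \psi\bigl(//_1(X^m(\gamma))\bigr) \, \mathcal{W}_x(g).$$

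By Lemma \ref{le4}, $\mathcal{W}_x(g)(\Omega_x \setminus \pi_m^{-1}(\Lambda_g^m)) \to 0$ as $m \to \infty$, so since $\|\psi\|_{C^0} < \infty$ the right-hand side differs from $\int_{\Omega_x} \psi(//_1(X^m))\, \mathcal{W}_x(g)$ by an error at most $\|\psi\|_{C^0} \cdot \mathcal{W}_x(g)(\pi_m^{-1}(\Lambda_g^m)^c) = o(1)$. Finally, Lemma \ref{le+}(ii) gives $//_1(X^m) \to \tilde{//}_1$ in probability, and since $\psi$ is bounded and continuous, $\psi(//_1(X^m)) \to \psi(\tilde{//}_1)$ in probability; the bounded convergence theorem then yields
$$\int_{\Omega_x} \psi(//_1(X^m))\, \mathcal{W}_x(g) \;\longrightarrow\; \int_{\Omega_x} \psi(\tilde{//}_1)\, \mathcal{W}_x(g) = \int_{O(r)} \psi\, \mu_x(\nabla),$$
which is the desired conclusion.

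The main technical point is the compatibility of $\Phi_m$ with the interpolated process $X^m$ on $\pi_m^{-1}(\Lambda_g^m)$. This is clean provided the interpolation rule $I$ can be taken to use the minimising geodesic on the $\rho$-neighbourhood of the diagonal, which is possible since $\rho < \mathrm{inj}(M,g)$. If instead a fixed $I$ from \cite{Em} (not of this form) is forced on us in Lemma \ref{le+}(ii), one should replace the exact identification by a stability argument: $\Phi_m(\pi_m(\gamma))$ and $X^m(\gamma)$ agree at all $s_i$ and interpolate within $\Lambda$ by geodesics of comparable length, so the corresponding ODE solutions for parallel transport differ in $O(r)$ by a quantity tending to zero in $\mathcal{W}_x(g)$-probability, which suffices.
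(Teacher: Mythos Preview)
Your proof is correct and follows essentially the same route as the paper's: the evaluation map $\pi_m$ is the paper's $\Xi^m$, the push-forward identity for $\mathcal{W}_x(g)$ and the use of Lemma~\ref{le+}(ii) plus Lemma~\ref{le4} are exactly the ingredients the paper invokes, and your bounded-convergence step is just the explicit version of the paper's appeal to convergence in distribution. The paper handles the interpolation-rule compatibility by extending $\Phi_m$ to $\tilde{\Phi}_m$ on all of $M^{m-1}$ via $I$ and then restricting, which amounts to the same thing as your choice of $I$ to agree with the minimising geodesic on $\Lambda$.
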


        \begin{proof} We regard the piecewise smooth process $X_s^m$ as a measure map $\Xi^m: \Omega_x \rightarrow M\times\cdots\times M=M^{m-1}$ given by $\gamma \mapsto (\gamma(\frac{1}{m}), \cdots, \gamma(\frac{m-1}{m}))$. The distribution of  the Wiener measure is $$(\Xi^m)_* \mathcal{W}_x(g) =\frac{1}{p_1(x,x)} \prod_{i=0}^{m-1} p_{\frac{1}{m}}(x_{i}, x_{i+1})\prod_{i=1}^{m-1}dv_g(x_i)  $$  by the definition of Wiener measure. Let $\tilde{\Phi}_m: M^{m-1} \rightarrow \Omega_x^\infty$ be defined by the interpolation $\tilde{\Phi}_m(x_1, \cdots, x_{m-1})(s)=I(x_i, x_{i+1}, m(s-s_i))$ for $s\in[s_i, s_{i+1}]$.
          Note that   $\tilde{\Phi}_m|_{\Lambda_g^m}= \Phi_m$ and  $\tilde{\Phi}_m\circ \Xi^m(\gamma) (s)=X^m_s(\gamma)$.
         The assertion ii) of Lemma \ref{le+} shows that  $//_1\circ \tilde{\Phi}_m\circ \Xi^m \rightarrow \tilde{//}_1$ in probability, which implies that  $//_1\circ \tilde{\Phi}_m\circ \Xi^m \rightharpoonup \tilde{//}_1$ in distribution (cf. Theorem 1.32 of \cite{ELE}).  Consequently, $$(//_1)_*(\tilde{\Phi}_m)_*(\Xi^m)_* \mathcal{W}_x(g) \rightharpoonup (\tilde{//}_1)_* \mathcal{W}_x(g)= \mu_x(\nabla)$$ in the distribution sense when $m \rightarrow\infty$.   By Lemma \ref{le4},   $$\int_{ M^{m-1}\backslash \Lambda_g^m}(\Xi^m)_* \mathcal{W}_x(g) \rightarrow 0,  \  \  \  \ m\rightarrow\infty,   $$ and we obtain the conclusion.
          \end{proof}

\section{Proofs}

   \begin{proof}[Proof of Theorem \ref{main}]  To prove this theorem, we   modify the induction argument in the proof of   Proposition 8.13    in \cite{Em}.  More precisely,  we  follow the arguments  closely  and  replace    the existence result  of solutions of  stochastic differential equations used   in the proof  of   Proposition 8.13    in \cite{Em}  by the stability theorems (cf. Theorem 15 of  Chapter V  in  \cite{P}, or  Section 5.3 in \cite{Evans}).

    Let $M$, $V$,  $g$, $h^k$, $h$,    $\nabla^k$, and $\nabla$ be the same as in the hypotheses.
     Let $A_s^k$ and $A_s$ be the solutions to  $\nabla^k A_s^k \langle \delta X_s\rangle  =0=\nabla A_s \langle \delta X_s\rangle$ respectively such that  the  initial vectors  $A^k_0$ and $A_0 $ in $V_x$ satisfy   $A^k_0 \rightarrow A_0$ in $ V_x $ as $k\rightarrow\infty$.
         Let
$ \mathcal{U}=\{U \}$
   be a coordinate open covering      of $M$,  and $0=S_0 \leq  \cdots  \leq S_m \leq \cdots$ be
     a sequence of stopping times  such that  for any $j$ and  any  $(\gamma, s)\in \mathcal{I}_j= [[S_j, S_{j+1}]]\cap \{(\gamma, s) \in
   \Omega_x\times [0,1] |S_j(\gamma)  < S_{j+1}(\gamma) \}$, we have $X_s(\gamma)\in U $ for a certain $U\in \mathcal{U}$.
   Furthermore, the collection $\{\mathcal{I}_j\} $ covers $ \Omega_x\times [0,1] $.

  Since $\nabla^k$ converges to  $\nabla$, $\Gamma^k$ converges locally  to $\Gamma$ in the $C^0$-sense on each  $U \in \mathcal{U}$.  There is a  coordinate  chart  $ U_0\in \mathcal{U}$ such that $x\in U_0$, and $X_s(\gamma)\in U_0 $ for any  $(\gamma, s)\in \mathcal{I}_0 $. On $U_0$, there is a trivialisation $V|_{U_0}\cong U_0\times \mathbb{R}^r $.   If  $A_s^{k|S_{1}}$ solves the   equations (\ref{eq0+}) with   $ A_0^{k|S_{1}}=A^k_0$   on $U_0$ in the Itô sense, then $A^{k|S_{1}}$ converges to $A^{|S_{1}}$ in probability, i.e.  $$ 0\leq  \mathbb{E}\big(\sup_{0\leq s \leq S_1}|A_s^{k|S_{1}}-A_s^{|S_{1}}|\big) \rightarrow 0,$$ when $k\rightarrow \infty$ by $ A_0^{k|S_{1}} \rightarrow A_0^{|S_{1}}=A_0$, and  the stability of stochastic  differential equation with respect to the coefficients (cf. Theorem 15 of  Chapter V  in  \cite{P}).

  We use  mathematical induction.       Assume that the stopped $A_s^{k|S_j}$ converges to $A_s^{|S_j}$ in probability  when $k\rightarrow\infty$, which are  driven by the stopped  $X_s^{|S_j}$.
   We consider the $X'_s=X_{\min \{S_j+s, S_{j+1}\}} $ under the conditional probability $\mathbb{P}_g(\cdot|\{\gamma \in
   \Omega_x |S_j(\gamma) < S_{j+1}(\gamma) \})$,     and
    apply  Theorem 15 of  Chapter V in  \cite{P}    to the equation (\ref{eq0+}).  More precisely, if  $A'^{k}_s=A^k_{\min \{S_j+s, S_{j+1}\}}$, then $A'^{k}_s$ (respectively $A'_s$) solves the equation (\ref{eq0+}) with coefficients $\Gamma^k$ (respectively $\Gamma$), and
       $A'^{k}_0=q A^{k|S_j}_{S_j} $ where    $q: U_j \cap U_{j+1} \rightarrow {\rm Hom} (\mathbb{R}^r, \mathbb{R}^r)$ is the transition function between two trivialisations    of the bundle $V$.  The induction hypothesis  shows that  $A'^{k}_0$ converges to $A'_0$ in probability.
    Thus  $A'^{k}_s \rightarrow A'_s $ in probability, and    $A^{k|S_{j+1}}$ converges to $A^{|S_{j+1}}$ in probability by (8.10) in \cite{Em}. Consequently
         $$ 0\leq  \mathbb{E}(|A_1^k-A_1|)\leq  \mathbb{E}\big(\sup_{s \in [0 , 1]}|A_s^k-A_s|_h\big) \rightarrow 0,$$ when $k\rightarrow \infty$, by (1.3) of  \cite{Em} via the induction of $j$.

          We  take an orthonormal basis $ A_{0,1}^k \cdots,  A_{0,r}^k \in V_x$ (respectively $ A_{0,1} \cdots,  A_{0,r} \in V_x$) with  respect to $h^k$ (respectively $h$) such that $ A_{0,j}^k \rightarrow  A_{0,j}$, $j=1, \cdots, r$, when $k\rightarrow\infty$. If we identify $V_x$ with $\mathbb{R}^r$ via $ A_{0,1}^k \cdots,  A_{0,r}^k$, then $\tilde{//}_1^k$  is given by the matrix  $ [A_{1,1}^k \cdots,  A_{1,r}^k]$.
              Thus we
           obtain  $\mathbb{E}(|A_{1,j}^k-A_{1,j}|)\rightarrow 0$, and   $$\lim_{k\rightarrow \infty}\mathbb{E}(|\tilde{//}_1^k-\tilde{//}_1|)=0, $$ i.e.,  $\tilde{//}_1^k$ converges to $\tilde{//}_1$
     in probability. Here we regards both $\tilde{//}_1^k$ and $\tilde{//}_1$ as elements in Hom$(\mathbb{R}^r, \mathbb{R}^r)$.    By Theorem 1.32 of \cite{ELE},  $\tilde{//}_1^k \rightarrow \tilde{//}_1$
      in distribution, and hence  $(\tilde{//}_1^k)_* \mathcal{W}_x(g)$ converges to $(\tilde{//}_1)_*\mathcal{W}_x(g)$ in the distribution sense on $O(r)$. We arrive at  the conclusion.
   \end{proof}

  \begin{proof}[Proof of Theorem \ref{thm3+}]  If we denote $//_1^k$  as  the holonomy map induced by $\nabla^k$, then $//_1^k$ converges to $//_1$ in the $C^0$-sense by the stability  of ordinary differential  equations with respect to coefficients.  Consequently,  we obtain the assertion i) by the definition of $\mu_x^m(\nabla)$.

  Note  that
     $$ \overline{//_1 \circ \Phi_m( \Lambda_g^m)}= {\rm supp}  ( \mu_x^m(\nabla)),    $$ since $//_1 \circ \Phi_m$ is continuous.
    By  $$\int_{  K} \mu_x^m(\nabla^k) \rightarrow \int_{  K} \mu_x^m(\nabla)=0,$$ we obtain $ {\rm supp}  ( \mu_x^m(\nabla))\subseteq H$, and by Lemma \ref{le3},     $$\overline{ \bigcup_{ m\in \mathbb{N}}{\rm supp}  ( \mu_x^m(\nabla))}=\overline{ \bigcup_{ m\in \mathbb{N}}//_1 \circ\Phi_m( \Lambda_g^m)} \subseteq H.$$  Therefore,
$ {\rm Hol}(\nabla) \subseteq H$.
  \end{proof}

Now, we provide  a concrete example of Theorem \ref{main}.

\begin{example}[Flat $U(1)$-connections]\label{ex1}
Let $M$ be
  a non-simply connected manifold,   $x\in M$,  and  $V= M\times \mathbb{C}$ be the trivial complex line  bundle.  Note that  $$ \Omega_x=\coprod_{\nu \in    \pi_1( M)} \Omega_x(\nu), \  \  \  \  \ \sum\limits_{\nu \in    \pi_1( M)} \int_{\Omega_x(\nu)}\mathcal{W}_x(g) =1,  \  \  \  \  \int_{\Omega_x(\nu)}\mathcal{W}_x(g)>0,   $$ where $\Omega_x(\nu)$ are connected components.  The moduli  space of flat $U(1)$-connections on $V$ is the Jacobian torus  $H^1(M, \mathbb{R}/\mathbb{Z})$, and the trivial connection $\nabla^0=d$ is represented by $0\in H^1(M, \mathbb{R}/\mathbb{Z})$.

   A flat  connection $\nabla=d+\Gamma$ is given by a closed pure imaginary  1-form $\Gamma$.  If $\gamma\in \Omega_x$ is a smooth loop, then  the parallel transport reads  $//_s(\gamma)=\exp(-\int_0^s \Gamma\langle \dot{\gamma}\rangle ds )$.  If $\gamma'$ is another loop homotopic to $\gamma$, then  $\int_\gamma \Gamma = \pm\int_{\gamma'} \Gamma$ by $d\Gamma=0$. Hence the holonomy map $//_1 : \Omega_x \rightarrow U(1)$ is a local constant map given by $ //_1(\gamma)= \exp(-\int_\gamma \Gamma)$ for any $\gamma\in \Omega_x$.  For an element  $\nu \in    \pi_1( M)$, there is a real number $\theta_\nu \in [0,1]$ such that $//_1(\gamma)=e^{2\pi\sqrt{-1}\theta_\nu}$ by taking any $\gamma \in \Omega_x(\nu)$.  The holonomy group of $\nabla$ is $${\rm Hol}_x(\nabla)=\{e^{2\pi\sqrt{-1}\theta_\nu}\in U(1)| \nu \in    \pi_1( M)\} .$$  If one of $\theta_\nu$ is an irrational number, then ${\rm Hol}_x(\nabla)$ is dense in $U(1)$, and if all   $\theta_\nu$ are rational numbers, then ${\rm Hol}_x(\nabla)$ is a finite subgroup of  $U(1)$.

    \begin{proposition}\label{pro-flat}   $$ \mu_x(\nabla)= \sum_{\nu \in    \pi_1( M)} \delta_{e^{2\pi\sqrt{-1}\theta_\nu}} \int_{\Omega_x(\nu)} \mathcal{W}_x(g),  \  \  \  \  \  \  \   \mu_x(\nabla^0)=\delta_1 $$ on $U(1)$, where $\delta_z$ is the Dirac delta function on $U(1)$. Consequently,  the support of $\mu_x(\nabla)$   $$ {\rm supp} (\mu_x(\nabla))= \overline{ {\rm Hol}_x(\nabla)}. $$ 
 \end{proposition}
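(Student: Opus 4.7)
The plan is to exploit flatness via Proposition \ref{prop+++} to show that the stochastic holonomy map $\tilde{//}_1$ is almost surely locally constant on each component of $\Omega_x$, and then compute the push-forward measure directly. First I would decompose $\Omega_x = \coprod_{\nu \in \pi_1(M)} \Omega_x(\nu)$ into its connected components (each of which is Borel measurable and has positive $\mathcal{W}_x(g)$-measure by \eqref{eq-component}). Since each $\Omega_x(\nu)$ contains a smooth representative $\gamma^o_\nu \in \Omega_x^\infty$, and since any $\gamma \in \Omega_x(\nu)$ is continuously homotopic to $\gamma^o_\nu$ by definition, I can apply Proposition \ref{prop+++}. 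Because $\nabla$ is flat, $F_\nabla = d\Gamma = 0$, and the formula collapses to
\[
\tilde{//}_1(\gamma) = //_1(\gamma^o_\nu) \exp\bigl(-\int_{\iota(D)} F_\nabla\bigr) = //_1(\gamma^o_\nu) = e^{2\pi\sqrt{-1}\theta_\nu}
\]
for every $\gamma \in \Omega_x' \cap \Omega_x(\nu)$. By Lemma \ref{le+}, $\Omega_x'$ has full $\mathcal{W}_x(g)$-measure, so $\tilde{//}_1$ is almost everywhere equal to the locally constant function $\gamma \mapsto e^{2\pi\sqrt{-1}\theta_\nu}$ on $\Omega_x(\nu)$.

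Given this, computing the push-forward is immediate: for any Borel set $K \subseteq U(1)$,
\[
\mu_x(\nabla)(K) = \mathcal{W}_x(g)\bigl(\tilde{//}_1^{-1}(K)\bigr) = \sum_{\nu : e^{2\pi\sqrt{-1}\theta_\nu} \in K} \int_{\Omega_x(\nu)} \mathcal{W}_x(g),
\]
which is precisely the claimed weighted sum of Dirac measures. For the trivial connection $\nabla^0 = d$, all $\theta_\nu = 0$, so every component contributes to $\delta_1$, and the total weight is $\sum_\nu \mathcal{W}_x(g)(\Omega_x(\nu)) = 1$ since $\mathcal{W}_x(g)$ is a probability measure; hence $\mu_x(\nabla^0) = \delta_1$.

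For the support statement, the atoms of $\mu_x(\nabla)$ are exactly those $e^{2\pi\sqrt{-1}\theta_\nu}$ for which the coefficient $\int_{\Omega_x(\nu)} \mathcal{W}_x(g)$ is strictly positive. Formula \eqref{eq-component} guarantees this positivity for every $\nu \in \pi_1(M)$, so the set of atoms is exactly $\mathrm{Hol}_x(\nabla) = \{e^{2\pi\sqrt{-1}\theta_\nu} : \nu \in \pi_1(M)\}$, and the support is its closure.

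The only subtle point, which I would address carefully, is the applicability of Proposition \ref{prop+++}: it requires both that $\tilde{//}_1(\gamma)$ is well-defined and that $//_1(X_s^m(\gamma)) \to \tilde{//}_1(\gamma)$ for the interpolated process. Both hold off a $\mathcal{W}_x(g)$-null set by Lemma \ref{le+}(ii), so intersecting with $\Omega_x'$ the a.e.\ identification of $\tilde{//}_1$ with the locally constant map survives, and the computation of $\mu_x(\nabla)$ is unaffected. No further analytic estimates are needed once this qualitative flatness argument is in place.
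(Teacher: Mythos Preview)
Your proposal is correct and follows essentially the same approach as the paper. The paper states Proposition~\ref{pro-flat} inside Example~\ref{ex1} without a separate proof, but the surrounding text establishes exactly the mechanism you use: flatness forces the (stochastic) holonomy to be a locally constant function on $\Omega_x$, equal to $e^{2\pi\sqrt{-1}\theta_\nu}$ on each component $\Omega_x(\nu)$, after which the push-forward formula and the support statement are immediate from \eqref{eq-component}. The only cosmetic difference is that the paper's discussion leans on the explicit formula $\tilde{//}_s=\exp(-\int_0^s\Gamma\langle\delta X_t\rangle)$ together with the Stokes formula of Proposition~\ref{le--}, whereas you invoke the line-bundle version Proposition~\ref{prop+++}; in the trivial line bundle setting these are interchangeable.
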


  Let    $ \nabla^k=d+\Gamma^k$ be a family of flat  $U(1)$-connections on $V$ such that  $\|\Gamma^k\|_{C^0}\rightarrow 0$ when $k\rightarrow \infty$, i.e.,  $\nabla^k$ converges to the trivial connection $\nabla^0=d$.
   If $//_s^k(\cdot)$ (respectively $//_s^0(\cdot)$) denotes the parallel transport  induced by $\nabla^k $ (respectively $\nabla^0$), then $//_1^k(\gamma)=\exp (-\int_\gamma \Gamma^k) $ and $//_s^0=$Id.
Note  that
   ${\rm Hol}_x(\nabla^0)=\{1\}$, and
    ${\rm Hol}_x(\nabla^k)$ are generated by $e^{2\pi\sqrt{-1}\theta^k_{\nu_j}}$, $j=1, \cdots, l$, where $\nu_j$ generate the fundamental group $\pi_1(M)$, and $ 2\pi\sqrt{-1}\theta^k_{\nu_j}=-\int_{\gamma_j}\Gamma^k \rightarrow 0$,  $[\gamma_j]=\nu_j$, when $k\rightarrow\infty$.  Therefore,  ${\rm Hol}_x(\nabla^k)$ become increasingly dense  in $U(1)$  when $k\rightarrow\infty$, leading to   $${\rm Hol}_x(\nabla^k) \rightarrow U(1), $$ in the Hausdorff sense. The limit is not equal to   ${\rm Hol}_x(\nabla^0)$.

    For any $\varepsilon >0$, there is a sequence of positive integers $\tau_k$ such that $\tau_k \rightarrow\infty$ and  $$\tau_k \max \{\theta^k_{\nu_j} | j=1, \cdots, l\}< \varepsilon\leq  (\tau_k +1)\max \{\theta^k_{\nu_j} | j=1, \cdots, l\}.$$
 If $\nu \in    \pi_1( M)$, then $\nu=\nu_{i_1}^{l_1}\cdots \nu_{i_c}^{l_c}$,   $2\pi \sqrt{-1}\theta_\nu^k= 2\pi \sqrt{-1} (l_1\theta_{\nu_{i_1}}^k+ \cdots + l_c\theta_{\nu_{i_c}}^k) $,  and we denote $|\nu|=\min \{|l_1|+\cdots +|l_c|\}$ where minimum is  taken over  all possible representations  of $\nu$ using  the generators $\nu_1, \cdots, \nu_l$.
  Thus
  $$0 \leq \int_{\{e^{ 2\pi \sqrt{-1}\theta}\in U(1)|\varepsilon < \theta <1-\varepsilon\}} \mu_x(\nabla^k) \leq  \sum_{|\nu| > \tau_{k}, \nu \in    \pi_1( M) }\int_{\Omega_x(\nu)}\mathcal{W}_x(g)  \rightarrow 0,$$  and
    \begin{equation}\label{eq-100}\mu_x(\nabla^k)\rightharpoonup \mu_x(\nabla^0)=\delta_1  \end{equation} in the distribution sense since  $\int_{U(1)} \mu_x(\nabla^k)=1$.
  \end{example}

 \begin{proof}[Proof of Corollary   \ref{pro-lag}] Clearly,  $$\mu_x(\nabla|_{M_t})\rightharpoonup\delta_1,$$ when $t\rightarrow 0$, which implies that
  $\mu_x(\nabla|_{M_0})=\delta_1$ by  $M_t\rightarrow M_0$ in $N$.  Hence   those $\theta_\nu$  in  Proposition \ref{pro-flat} for $\mu_x(\nabla|_{M_0})$ are zero, i.e.,  $\theta_\nu=0$, since the support of the Dirac function $\delta_1$ is $\{1\}\subset U(1)$ and $\int_{\Omega_x(\nu)}\mathcal{W}_x(g)>0$ for any $\nu \in    \pi_1( M)$.
  We obtain that   $\nabla|_{M_0}=d$ is trivial by passing to gauge changes, and  arrive at  the conclusion.
 \end{proof}

\footnotetext[2]{The text is improved by using
 Grammarly.}

\end{document}